\newtheorem{sat}{Theorem}[section]		
\newtheorem{lem}[sat]{Lemma}
\newtheorem{prop}[sat]{Proposition}
\newtheorem*{defi*}{Definition}			
\newtheorem*{bei*}{Example}
\newtheorem*{sat*}{Theorem}				
\newtheorem*{kor*}{Corollary}
\newtheorem*{rmk*}{Remark}				
\newtheorem*{quest*}{Question}	
\newtheorem{fact}{Fact}	
\let\ssection=\section
\renewcommand{\section}{\setcounter{equation}{0}\ssection}
\newtheorem*{namedtheorem}{\theoremname}
\newcommand{\theoremname}{testing}
\newenvironment{named}[1]{\renewcommand{\theoremname}{#1}\begin{namedtheorem}}{\end{namedtheorem}}
\theoremstyle{remark}
\newtheorem*{namedtheoremr}{\theoremnamer}
\newcommand{\theoremnamer}{testing}
			\newcommand{\BH}{\mathbb H}
\newcommand{\BR}{\mathbb R}			
\newcommand{\BS}{\mathbb S}			\newcommand{\BZ}{\mathbb Z}
\newcommand{\BX}{\mathbb X}
\newcommand{\CC}{\mathcal C}		
		\newcommand{\CF}{\mathcal F}
\newcommand{\CG}{\mathcal G}		\newcommand{\CH}{\mathcal H}
		\newcommand{\CL}{\mathcal L}
\newcommand{\CM}{\mathcal M}		
		\newcommand{\CR}{\mathcal R}
\newcommand{\CS}{\mathcal S}
\newcommand{\D}{\partial}
\newcommand{\DD}{\nabla}
\DeclareMathOperator{\PSL}{PSL}		
\DeclareMathOperator{\vol}{vol}		
\DeclareMathOperator{\arccosh}{arccosh}
\newcommand{\comment}[1]{}
\DeclareMathOperator{\syst}{syst}
\DeclareMathOperator{\supp}{supp}
\DeclareMathOperator{\I}{i}
\begin{document}

\title[]{Variations on a theorem of Birman and Series}
\author{Anna Lenzhen}
\address{IRMAR, Universit\'e de Rennes 1}
\email{anna.lenzhen@univ-rennes1.fr}
\author{Juan Souto}
\address{IRMAR, Universit\'e de Rennes 1}
\email{juan.souto@univ-rennes1.fr}
\date{\today}


\begin{abstract}
Suppose that $\Sigma$ is a hyperbolic surface and $f:\BR_+\to\BR_+$ a monotonic function. We study the closure in the projective tangent bundle $PT\Sigma$ of the set of all geodesics $\gamma$ satisfying $\I(\gamma,\gamma)\le f(\ell_\Sigma(\gamma))$. For instance we prove that if $f$ is unbounded and sublinear then this set has Hausdorff dimension strictly bounded between $1$ and $3$.
\end{abstract}

\maketitle

\section{Introduction}

Let $\Sigma$ be a hyperbolic surface which, unless said explictly, we suppose closed. It is by now a classical result of Birman and Series \cite{Birman-Series} that the set of all geodesics in $\Sigma$ with boundedly many self-intersections has Hausdorff dimension $1$. Recently, Sapir \cite{Sapir} proved that this remains true for the set of all geodesics $\gamma:\BR\to P$ in a pair of pants $P$ with infinitely many self-intersections, as long as these self-intersections are themselves sparsely distributed along the geodesic in the sense that the number of self-intersections within $\gamma[-L,L]$ grows subquadratically when $L\to\infty$. In this note we consider a different variation of the Birman-Series theorem: while the set considered by Sapir is not closed, we will be interested in the closure of the set of closed geodesics whose self-intersection number is bounded in terms of the length. 

More precisely, whenever $f:\BR_+\to\BR_+$ is a positive monotonic function, we will be interested in the closure of the set $\CS_f$ of those (non-constant) primitive periodic geodesics $\gamma$ in $\Sigma$ satisfying
\begin{equation}\label{eq-flegal}
\I(\gamma,\gamma)\le f(\ell_\Sigma(\gamma)).
\end{equation}
Here $\I(\gamma,\gamma)$ is the self-intersection number of $\gamma$ and $\ell_\Sigma(\gamma)$ is the length of $\gamma$ with respect to the hyperbolic metric. A periodic geodesic satisfying \eqref{eq-flegal} will be called {\em $f$-simple}. Moreover, we will identify geodesics with the associated geodesic lifts to the projectivized tangent bundle $PT\Sigma$. Finally, given that small changes on the function $f$ may result in individual curves becoming $f$-simple or no longer being so, we will consider the set
$$X_f=\bigcap_{T=0}^\infty\overline{\{\gamma\in\CS_f\text{ with }\ell_\Sigma(\gamma)\ge T\}}\subset PT\Sigma$$
instead of working directly with the closure $\overline\CS_f$ of the set of $f$-simple geodesics. Note that $X_f$ might be equivalently described as the set of points in $PT\Sigma$ which belong to some Hausdorff limit of a sequence of pairwise distinct $f$-simple geodesics. In particular, $\overline\CS_f\setminus X_f$ consists of countably many closed curves, which implies that both sets $\overline\CS_f$ and $X_f$ have identical Hausdorff dimension.

As we will see, the set $X_f$ depends only very coarsely on the function $f$ and the concrete hyperbolic metric on $\Sigma$. Our first observation is that if $f$ is superlinear then $X_f$ is equal to the whole of $PT\Sigma$:

\begin{sat}\label{sat1}
If $f:\BR_+\to\BR_+$ is such that $\limsup_{t\to\infty}\frac{f(t)}t=\infty$, then $X_f=PT\Sigma$ for every closed hyperbolic surface $\Sigma$.
\end{sat}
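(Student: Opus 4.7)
Fix $(x,v)\in PT\Sigma$, a neighborhood $U$ of $(x,v)$, and $T>0$; the goal is to produce a primitive $f$-simple closed geodesic $\gamma$ with $\ell_\Sigma(\gamma)\ge T$ and $\gamma\cap U\ne\emptyset$. The plan is to build $\gamma$ as the geodesic representative of a free homotopy class of the form $\alpha^n\cdot\beta$, where $\alpha$ is a fixed simple closed geodesic of length $L_0$ and $\beta$ is a short arc chosen so that the resulting closed geodesic passes through $U$. Wrapping $\alpha$ many times drives $\ell_\Sigma(\gamma)$ to infinity while keeping $\I(\gamma,\gamma)$ linear in the number of turns, so the superlinear growth of $f$ along some sequence provides plenty of slack to absorb these intersections.

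To arrange that the construction meets $U$, I take $\beta$ to be an \emph{orthogeodesic of $\alpha$}: a geodesic arc with both endpoints on $\alpha$ meeting $\alpha$ perpendicularly there. The key preliminary observation is that orthogeodesics of $\alpha$ are dense in $PT\Sigma$. Indeed, in the universal cover $\BH^2$ the lifts of $\alpha$ form a $\pi_1(\Sigma)$-invariant family of pairwise disjoint geodesics, and minimality of the $\pi_1(\Sigma)$-action on $\partial\BH^2$ implies that their endpoints are dense in $\partial\BH^2$; hence, given any $(\tilde x,\tilde v)\in PT\BH^2$, two lifts of $\alpha$ can be found whose common perpendicular passes arbitrarily close to $(\tilde x,\tilde v)$ (take lifts approximating the two geodesics orthogonal to $\tilde L$ at two points on either side of $\tilde x$, where $\tilde L$ is the geodesic through $(\tilde x,\tilde v)$). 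Pick $\beta$ accordingly so that its image in $PT\Sigma$ meets $U$.

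For each $n$, let $\gamma_n$ be the closed geodesic in the free homotopy class of the piecewise geodesic loop obtained by concatenating $n$ copies of $\alpha$ with $\beta$ (closed up along a short arc of $\alpha$ if the endpoints of $\beta$ differ). Three standard hyperbolic geometry estimates then need to be verified: $\ell_\Sigma(\gamma_n)=nL_0+O(1)$ with the $O(1)$ uniform in $n$; $\I(\gamma_n,\gamma_n)\le Cn$ with $C=C(\alpha,\beta)$, since every self-intersection of $\gamma_n$ is either a crossing of the excursion with one of the $n$ near-copies of $\alpha$ or lies within the excursion itself; and, because $\beta$ meets $\alpha$ orthogonally, the excursion of $\gamma_n$ converges to $\beta$ in $PT\Sigma$ as $n\to\infty$, so $\gamma_n\cap U\ne\emptyset$ for all large $n$. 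Primitivity of $\gamma_n$ holds for all but finitely many $n$ after a mild generic perturbation of $\beta$ if needed. With this in hand, use the hypothesis $\limsup f(t)/t=\infty$---which forces $f$ to be eventually increasing, since a bounded or decreasing $f$ would give $\limsup f(t)/t\le 0$---to pick $t_k\to\infty$ with $f(t_k)/t_k\to\infty$ and then $n_k$ satisfying $t_k\le\ell_\Sigma(\gamma_{n_k})\le 2t_k$. Then $\I(\gamma_{n_k},\gamma_{n_k})\le Cn_k\le 2Ct_k/L_0$ whereas $f(\ell_\Sigma(\gamma_{n_k}))\ge f(t_k)$ grows superlinearly in $t_k$, so $\gamma_{n_k}$ is $f$-simple with length above $T$ and intersecting $U$ for all $k$ large, proving $(x,v)\in X_f$.

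The delicate step is the claimed convergence of the excursion of $\gamma_n$ to $\beta$ in $PT\Sigma$. In the universal cover, the axis of the hyperbolic element $[\alpha]^n[\beta]\in\pi_1(\Sigma)$ has attracting fixed point converging to the attracting fixed point of $\alpha$ and repelling fixed point converging to the $[\beta]^{-1}$-image of the repelling fixed point of $\alpha$; the axis therefore converges, as $n\to\infty$, to a specific bi-infinite geodesic that spirals into $\alpha$ in both directions on $\Sigma$. The orthogonality of $\beta$ at its endpoints is exactly what forces the ``middle arc'' of this limit geodesic, i.e.\ the piece connecting the two spirals, to be the unique orthogeodesic in the relative homotopy class of $\beta$, which is $\beta$ itself.
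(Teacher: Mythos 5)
Your outer strategy is the same as the paper's (Lemma \ref{lem-I have a headache}: wrap many times around a closed geodesic so that length grows while self-intersections stay controlled, then use $\limsup f(t)/t=\infty$ plus monotonicity to land the length at a value where $f$ beats a linear function), and your dilution arithmetic in the third paragraph is correct; since your wrapping curve $\alpha$ is simple you even avoid the quadratic term $\I(\gamma^n,\gamma^n)\sim n^2\I(\gamma,\gamma)$ that forces the paper to introduce a second parameter $a_n$. The gap is in the targeting mechanism, i.e.\ in the claim that $\gamma_n$ meets $U$. First, your density argument for orthogeodesics asks for lifts of $\alpha$ approximating two prescribed geodesics orthogonal to $\tilde L$. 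Such lifts need not exist: the set of lifts of a simple closed geodesic is a closed, uniformly separated subset of the space of geodesics of $\BH^2$ (distinct lifts stay at least a collar width apart), so although the set of \emph{individual} endpoints of lifts is dense in $\D_\infty\BH^2$, the set of endpoint \emph{pairs} is nowhere dense in $\D_\infty\BH^2\times\D_\infty\BH^2$; for instance no lift of $\alpha$ comes close to a geodesic that crosses a lift of $\alpha$ transversally. (Density of orthogeodesics is nevertheless true --- take two sequences of lifts shrinking to small neighbourhoods of the two endpoints of $\tilde L$; their common perpendiculars then converge to $\tilde L$ --- but not by the route you describe.) Second, and fatally, the excursion of $\gamma_n$ does \emph{not} converge to $\beta$. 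As you note, the axes of $[\alpha]^n[\beta]$ converge to the geodesic $L_\infty$ with endpoints $[\beta]^{-1}(\alpha^-)$ and $\alpha^+$. This $L_\infty$ shares one endpoint at infinity with $\tilde\alpha_0$ and one with $[\beta]^{-1}\tilde\alpha_0$, hence is disjoint from both lifts; it therefore cannot contain the arc $\tilde\beta$, whose endpoints lie \emph{on} those lifts. It merely fellow-travels the common perpendicular within a bounded and in general strictly positive distance: e.g.\ for the semicircles over $[1,2]$ and $[-2,-1]$ in the upper half-plane the common perpendicular lies on $|z|=\sqrt 2$, while the geodesic joining the endpoints $2$ and $-2$ is $|z|=2$, which stays at distance $\tfrac 12\log 2$ away. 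Orthogonality of $\beta$ buys nothing here, since $L_\infty$ depends only on the group element $[\beta]$ and not on the representative arc. So for a neighbourhood $U$ of a point of $\beta$ of radius smaller than this discrepancy, nothing forces $\gamma_n\cap U\neq\emptyset$.

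The repair essentially lands you on the paper's argument: aim not at the orthogeodesic but at the limit geodesics themselves. The geodesics of $\BH^2$ of the form $(h\alpha^-,g\alpha^+)$ with $g,h\in\pi_1(\Sigma)$ \emph{are} dense in the space of geodesics, because their two endpoints are chosen independently from dense orbits in $\D_\infty\BH^2$, and each arises as the limit of the axes of (a conjugate of) $[\alpha]^n b$ with $b=h^{-1}g$; so every point of $PT\Sigma$ lies in the Hausdorff limit of a suitable sequence $\gamma_n$, and your length/intersection estimates then finish the proof. This endpoint-tracking is exactly what the paper does in Fact \ref{fact1} and Lemma \ref{lem-I have a headache}, where the target is an arbitrary closed geodesic $\gamma$ (dense in $PT\Sigma$) approximated by the $\gamma^n$-part of $\gamma^n*\alpha^{a_n}$ rather than by the excursion.
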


In particular, for any superlinear function $f$ we have that $X_f$ has Hausdorff dimension $\dim_{\CH}X_f=3$. On the other hand, if the function $f$ is bounded then $X_f$ is contained in the set of all geodesics with boundedly many self-intersections and thus has Hausdorff dimension $1$ by the Birman-Series theorem \cite{Birman-Series}. Our main results concern what happens if $f$ is unbounded and sublinear. First we prove that for any two functions with these properties we get identical sets:

\begin{sat}\label{sat2}
If  $\Sigma$ is a closed hyperbolic surface, then there is a set $\BX(\Sigma)\subset PT\Sigma$ with $X_f=\BX(\Sigma)$ whenever $f:\BR_+\to\BR_+$ is unbounded and satisfies $\limsup_{t\to\infty}\frac{f(t)}t=0$.
\end{sat}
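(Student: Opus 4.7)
The plan is to establish the inclusion $X_{f_2}\subseteq X_{f_1}$ whenever $f_1\le f_2$ are both unbounded sublinear monotonic. The reverse inclusion is immediate (an $f_1$-simple geodesic is automatically $f_2$-simple), so together this will give $X_{f_1}=X_{f_2}$. For two arbitrary such $f,g$ one then compares each of $X_f,X_g$ to $X_{\min(f,g)}$, which is again unbounded, sublinear and monotonic; hence $\BX(\Sigma)$ is simply the common value of all $X_f$.

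So fix $f_1\le f_2$ and take $v\in X_{f_2}$, realized as $v=\lim v_n$ with $v_n\in\gamma_n$ for pairwise distinct $f_2$-simple geodesics $\gamma_n$. Set $L_n=\ell_\Sigma(\gamma_n)$ and $k_n=\I(\gamma_n,\gamma_n)\le f_2(L_n)$; sublinearity forces $k_n/L_n\to 0$ while $L_n\to\infty$. If $f_1(L_n)\ge k_n$ along a subsequence the curves $\gamma_n$ are themselves already $f_1$-simple and we are done. Otherwise, for each large $n$ I must construct a distinct primitive closed geodesic $\gamma'_n$ which is Hausdorff close to $\gamma_n$ in $PT\Sigma$ (so that $v$ remains in the Hausdorff limit) and satisfies $\I(\gamma'_n,\gamma'_n)\le f_1(\ell(\gamma'_n))$, with $\ell(\gamma'_n)\to\infty$.

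The construction of $\gamma'_n$ exploits the low intersection density: removing the $k_n$ self-intersections of $\gamma_n$ decomposes it into $2k_n$ simple subarcs of average length $L_n/(2k_n)\to\infty$. By the Anosov closing lemma, or more flexibly the specification property of the geodesic flow on $\Sigma$, one can $\varepsilon$-shadow any prescribed finite concatenation of simple orbit segments by a single primitive closed geodesic whose self-intersection count is controlled by the number of junctions and by the shadowing scale. Moreover one can further inflate the length of the shadow beyond any prescribed target by inserting additional near-periodic passes. Chaining the shadows of the consecutive simple subarcs of $\gamma_n$ at a scale $\varepsilon_n\to 0$, and then inflating, produces a primitive $\gamma'_n$ which tracks all of $\gamma_n$ in Hausdorff distance on $PT\Sigma$, with $\ell(\gamma'_n)$ pushed above the level needed to make $f_1(\ell(\gamma'_n))$ exceed the residual intersection count.

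The main obstacle is the delicate quantitative balance between the competing constraints: Hausdorff closeness forces $\gamma'_n$ to shadow essentially all of $\gamma_n$, which threatens to reintroduce its $k_n$ self-intersections, while the $f_1$-simplicity condition $\I(\gamma'_n,\gamma'_n)\le f_1(\ell(\gamma'_n))$ is very restrictive since $f_1\ll f_2$. The necessary slack is provided jointly by the sublinearity hypothesis (making self-intersections sparse along $\gamma_n$, hence cheap to reproduce when shadowing) and by the unboundedness hypothesis (so that $f_1(\ell(\gamma'_n))$ can be driven above any fixed intersection count by making $\ell(\gamma'_n)$ sufficiently large). The calibration of the shadowing scale $\varepsilon_n$, the number of shadowing junctions, and the number of inflation passes so that both conditions hold simultaneously is the technical heart of the proof.
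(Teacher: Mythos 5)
The reduction to comparable pairs via $\min(f_1,f_2)$ is fine, but the construction of $\gamma'_n$ has a genuine gap at the inflation step, and I do not see how to repair it within your framework. If $\gamma'_n$ shadows all of $\gamma_n$ at scale $\varepsilon_n\to 0$, then (as you yourself note) essentially all $k_n$ transverse self-intersections of $\gamma_n$ are reproduced, so $\I(\gamma'_n,\gamma'_n)\gtrsim k_n$ and you are forced to take $\ell_\Sigma(\gamma'_n)\gtrsim f_1^{-1}(k_n)$. This can be astronomically larger than $L_n$: for $f_2(t)=t/\log t$ and $f_1(t)=\log t$ you need $\ell_\Sigma(\gamma'_n)\ge e^{k_n}$ with $k_n\sim L_n/\log L_n$. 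That extra length must come from somewhere, and ``inserting near-periodic passes'' means concatenating with closed orbits $\alpha$. But an $f_2$-simple geodesic $\gamma_n$ may perfectly well be filling, in which case $\I(\gamma_n,\alpha)\ge c(\gamma_n)\,\ell_\Sigma(\alpha)>0$ for \emph{every} closed geodesic $\alpha$, so each inserted pass creates new self-intersections at a rate bounded below by a positive constant per unit of added length. Then $\I(\gamma'_n,\gamma'_n)$ grows at least linearly in $\ell_\Sigma(\gamma'_n)$ during the inflation, which is precisely what the sublinear bound $f_1(\ell_\Sigma(\gamma'_n))$ forbids. The same obstruction appears if you shadow only a segment $\omega$ of $\gamma_n$ through $v_n$ and pad with $\alpha^{m}$: the self-intersection number of $\omega*\alpha^{m}$ grows like $m\cdot\I(\omega,\alpha)$, i.e.\ linearly in the length, unless $\omega$ is disjoint from $\alpha$. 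So the calibration you defer to ``the technical heart of the proof'' is not merely delicate; for sublinear $f_1$ it is impossible as stated.

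The missing ingredient is exactly this disjointness, and the paper obtains it by passing through measured laminations. One first shows (Proposition \ref{prop-contained in X}) via geodesic currents that the normalized currents $\frac{1}{\ell_\Sigma(\gamma_n)}\gamma_n$ of a sublinearly simple sequence converge to a current with vanishing self-intersection, hence to a measured lamination $\mu$, and that the Hausdorff limit lies in $\hat\mu$; this is the only place sublinearity is used. The geodesic through $v$ is then disjoint from simple closed curves $\eta$ approximating $\mu$, and the inflation is performed by applying high powers of the Dehn twist $D_\eta$ to a curve disjoint from $\eta$: this adds arbitrary length while keeping the self-intersection number \emph{fixed}, since $D_\eta$ is a homeomorphism. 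That is the only available mechanism for adding length at zero marginal cost in self-intersections, and your proposal never produces the simple closed curve about which to twist. Note also that $v$ only needs to lie in the Hausdorff limit of the $\gamma'_n$, so tracking all of $\gamma_n$ is both unnecessary and harmful: the paper's approximants spend almost all of their length spiralling around $\eta$.
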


Recall that the geodesic flows of any two closed hyperbolic surfaces $\Sigma,\Sigma'$ of the same genus are orbit equivalent, meaning that there is a H\"older homeomorphism
$$\Phi:PT\Sigma\to PT\Sigma'$$ 
which maps geodesics to geodesics. It follows either directly from Theorem \ref{sat2} or from the description of $\BX(\Sigma)$ given in \eqref{eq-setX} that $\Phi$ maps $\BX(\Sigma)$ homeomorphically to $\BX(\Sigma')$. In particular, since the H\"older constant of $\Phi$ is close to $1$ if the surfaces $\Sigma$ and $\Sigma'$ are close in the Hausdorff topology, it follows that for each $g$ the function 
\begin{equation}\label{eq-continuity}
\CM_g\to[1,\infty),\ \ \Sigma\mapsto\dim_\CH\BX(\Sigma)
\end{equation}
which associates to each surface $\Sigma$ the Hausdorff dimension of the set $\BX(\Sigma)$ is continuous on the moduli space $\CM_g$ of closed hyperbolic surfaces of genus $g$. Our next aim is to show that it is not constant and to estimate it in terms of the geometry of $\Sigma$:

\begin{sat}\label{prop-H-bound}
There are constants $C>0$ and $c>0$ such that
$$3-c\cdot\frac{\syst(\Sigma)}{\vol(\Sigma)}\le \dim_{\CH}\BX(\Sigma) \le3-C\cdot\left(\frac{\syst(\Sigma)}{\vol(\Sigma)}\right)^2$$
for every closed hyperbolic surface $\Sigma$. Here $\syst(\Sigma)$ is the systole of $\Sigma$, i.e. the length of the shortest closed geodesic.
\end{sat}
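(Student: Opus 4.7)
The plan is to estimate the topological entropy of the geodesic flow restricted to $\BX(\Sigma)$ and apply Bowen's formula. For a closed hyperbolic surface of curvature $-1$ the two transverse Lyapunov exponents are $\pm 1$, and one has $\dim_{\CH} K\le 1+2h_{top}(K)$ for any compact flow-invariant $K\subset PT\Sigma$, with equality on hyperbolic basic sets. Theorem \ref{prop-H-bound} therefore reduces to the entropy estimate $h_{top}(\BX(\Sigma))\le 1-\tfrac{C}{2}(\syst(\Sigma)/\vol(\Sigma))^2$ together with the existence of a basic subset $\Lambda\subset\BX(\Sigma)$ of entropy at least $1-\tfrac{c}{2}(\syst(\Sigma)/\vol(\Sigma))$.

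For the upper bound I would combine Theorem \ref{sat2} with a counting estimate for $f$-simple primitive closed geodesics. Fixing $f(L)=\sqrt{L}$ (unbounded, sublinear) reduces the task to bounding the exponential growth rate of this set. The key geometric ingredient is a lower bound on self-intersection numbers: a generic closed geodesic of length $L$ on $\Sigma$ accrues self-intersections at rate of order $L^2/\vol(\Sigma)$, and any $f$-simple closed geodesic of large length must deviate from this generic behavior by concentrating on a subset of $\Sigma$ of controlled size. The systole provides the smallest spatial scale at which such concentration is geometrically possible. A Bowen-ball cover at this scale, together with a large-deviation estimate for the self-intersection density, then yields the quadratic entropy deficit $(\syst/\vol)^2$.

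For the lower bound I would construct an explicit hyperbolic basic subset of $f$-simple geodesics. Let $\alpha$ be the systolic geodesic of length $\syst(\Sigma)$; its standard collar has width of order $\log(1/\syst(\Sigma))$ and carries arcs that wind arbitrarily many times around $\alpha$ without self-intersection. Outside the collar, a Markov partition furnishes a high-entropy subsystem of the geodesic flow. By concatenating long collar windings with short excursions drawn from this subsystem, and restricting the frequency of excursions so that the total self-intersection number remains sublinear in length, one obtains a subshift of finite type whose coded geodesics are $f$-simple. Checking that this subshift embeds into $\BX(\Sigma)$ and computing that its entropy differs from $1$ by an amount proportional to the fraction of arc-length spent on excursions, which can be arranged to be of order $\syst/\vol$, yields the claimed linear deficit.

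The hard part is the sharp quantitative link between the self-intersection budget $o(L)$ and the entropy deficit. For the upper bound one needs a pointwise, not merely averaged, self-intersection lower bound that quantifies how many intersections arise from covering $\Sigma$ at the systolic scale; this does not follow formally from Lalley-type averaged results and must be proved directly. For the lower bound the challenge is to verify that the constructed subshift genuinely embeds into $\BX(\Sigma)$, i.e.\ that its periodic orbits occur as Hausdorff limits of $f$-simple primitive closed geodesics in $\Sigma$ rather than merely of closed curves.
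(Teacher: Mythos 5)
Your strategy has a fatal structural flaw on both sides, stemming from where the dimension of $\BX(\Sigma)=X_f$ actually comes from. For the upper bound you propose to control the entropy of $X_f$ by the exponential growth rate of the $f$-simple \emph{closed} geodesics themselves. But $X_f$ is a closure, and the closure is where essentially all of the dimension lives: for every simple closed geodesic $\eta$, the set $\hat\eta$ of all geodesics trapped in $\Sigma\setminus\eta$ is contained in $X_f$ (Lemma \ref{Lem:curve hat}), and $\hat\eta$ contains the full non-wandering set of the convex-cocompact cover $\Sigma_\eta$ --- in particular closed geodesics with quadratic self-intersection, growing in number like $e^{\delta_\eta L}$ with $\delta_\eta$ close to $1$. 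None of these is $f$-simple for sublinear $f$. Meanwhile the $f$-simple closed geodesics themselves (say for $f(L)=\sqrt L$) have subexponential growth, so your reduction would yield entropy $0$ and $\dim_\CH X_f\le 1$, contradicting the lower bound of the very theorem you are proving. Counting the approximating curves simply does not bound the dimension of their Hausdorff limits. Your lower bound has the dual problem: a subshift built from \emph{long} collar windings interrupted by \emph{short} excursions has entropy of order the fraction of time spent on excursions (winding around a closed geodesic generates no entropy), i.e.\ of order $\syst/\vol$, not $1-O(\syst/\vol)$; this produces a set of dimension near $1$, not near $3$. Moreover, insisting that the coded geodesics be $f$-simple is unnecessary and self-defeating: the point of the Dehn-twist argument in Lemma \ref{Lem:curve hat} is that only the approximating closed curves need to be $f$-simple (twisting inflates length without changing self-intersection), so \emph{every} geodesic of $\Sigma\setminus\eta$, however badly self-intersecting, lies in $X_f$. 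Finally, the inequality $\dim_\CH K\le 1+2h_{top}(K)$ for an arbitrary compact invariant set, and the ``pointwise large-deviation estimate for self-intersection density'', are both asserted rather than proved, and you acknowledge the latter yourself.

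For contrast, the paper avoids entropy and counting altogether. It first reduces (Lemma \ref{lem-simplerX}, via a degree-$3$ cover and the Birman--Series theorem for the polygonal-complement laminations) to estimating $\dim_\CH\hat\eta$ over simple closed geodesics $\eta$ only; then identifies $\dim_\CH\hat\eta=1+2\dim_\CH\Lambda(\Gamma_\eta)$ for the Fuchsian group uniformizing $\Sigma\setminus\eta$ (Lemma \ref{lem-dimdimlim}); and finally bounds $\dim_\CH\Lambda(\Gamma_\eta)$ on both sides through $\lambda_0(\Sigma_\eta)$ via Patterson--Sullivan, using an explicit Rayleigh-quotient test function for the lower bound and the Cheeger inequality together with a concrete isoperimetric computation on the covers $\Sigma_\eta^A$ for the upper bound. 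If you want to salvage a dynamical proof, the correct entropy to estimate is $\sup_\eta\delta_\eta$ for the trapped sets $\hat\eta$, which is exactly the quantity the paper computes by spectral methods.
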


In the case that the surface $\Sigma$ satisfies $10\syst(\Sigma)\le\vol(\Sigma)$, which happens for instance whenever the genus is at least $6$, one can give concrete numerical values to the constants in Theorem \ref{prop-H-bound}. In fact we get in this case that
$$2+\sqrt{1-10\frac{\syst(\Sigma)}{\vol(\Sigma)}}\le \dim_\CH\BX \le 2+\sqrt{1-\frac{4\cdot\syst(\Sigma)^2}{\vol(\Sigma)^2+4\cdot\syst(\Sigma)^2}}.$$
In a different direction, recall that it is due to Buser and Sarnak \cite{Buser-Sarnak} that the maximal value of the systole function on $\CM_g$ is of the order of $\frac{\log(g)}g$. In particular, we get from Theorem \ref{prop-H-bound} that the image of \eqref{eq-continuity} is an interval of the form $[m_g,3)$ where, for large $g$, we have
$$ 3-B\frac{\log(g)}{g}\le m_g\le 3-A\frac{\log(g)^2}{g^2}$$
for suitable choices of $A,B$.
\medskip

The paper is organized as follows. In section \ref{sec:contained in X} we define the set $\BX$ explicitly and prove Theorem \ref{sat1} and Theorem \ref{sat2}. In section \ref{sec:H-dimension} we reduce the proof of Theorem \ref{prop-H-bound} to a result (Proposition \ref{prop-blablabla}) estimating the Hausdorff dimension of the limit sets of the Fuchsian groups associated to certain infinite degree covers of $\Sigma$. Proposition \ref{prop-blablabla} is proved in section \ref{sec-cheeger}: the main tool for the proof is the work of Patterson, Sullivan, Cheeger and Buser relating Hausdorff dimensions of limit sets to isoperimetric quantities. We conclude in section \ref{sec:comments} with some comments on the sets $X_f$ for linear functions. 

\section{Proof of Theorem \ref{sat1} and Theorem \ref{sat2}}\label{sec:contained in X}

In this section we prove Theorem \ref{sat1} and Theorem \ref{sat2}. Suppose that $\Sigma$ is a closed hyperbolic surface and denote by $\CM\CL(\Sigma)$ the set of measured laminations on $\Sigma$. Let also $\CM\CL_{\min}(\Sigma)\subset\CM\CL(\Sigma)$ be the subset consisting of those measured laminations $\mu$ whose support $\supp(\mu)$ is {\em minimal}, meaning that each half-leaf is dense in $\supp(\mu)$. Abusing terminology we will often identify measured laminations and their  supports. We refer to \cite{Casson-Bleiler, Javi-Cris} for basic facts on laminations and measured laminations. 
   \begin{figure}[ht]
  \setlength{\unitlength}{0.01\linewidth}
  \begin{picture}
  (100, 40)
  \put(10,0){
  \includegraphics{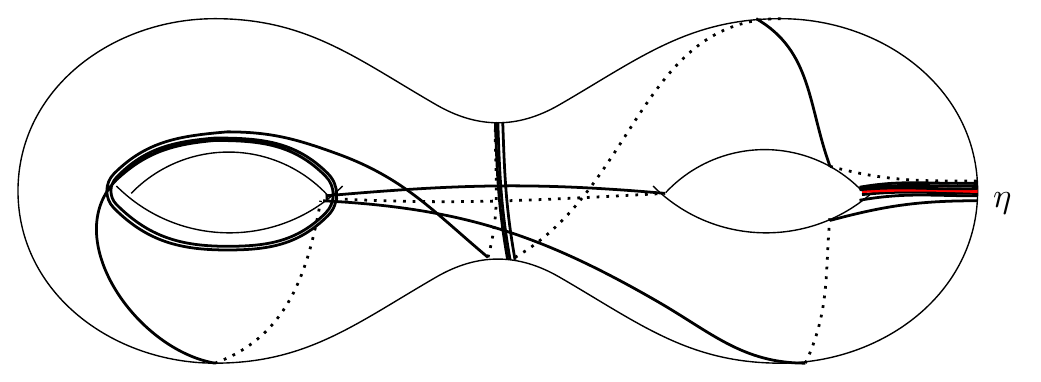}
  }
  \end{picture}
  \caption{A typical element in $\hat\eta$, where $\eta$ is a simple closed curve}
  \label{Fig:hat}
  \end{figure} 
 
Given $\mu\in\CM\CL(\Sigma)$ we denote by $\hat\mu\subset PT\Sigma$ the set of all (lifts of) geodesics in $\Sigma$ which do not meet $\supp(\mu)$ transversally. We are going to prove that the set
\begin{equation}\label{eq-setX}
\BX=\bigcup_{\mu\in\CM\CL_{\min}(\Sigma)}\hat\mu
\end{equation}
satisfies the claim of Theorem \ref{sat2}. We show first that $X_f\subset\BX$ for every sublinear function.

\begin{prop}\label{prop-contained in X}
Let $\BX$ be as in \eqref{eq-setX}. Then we have  $X_f\subset\BX$ for every function $f:\BR_+\to\BR_+$ which satisfies $\limsup_{t\to\infty}\frac{f(t)}t=0$.
\end{prop}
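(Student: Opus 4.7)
The plan is to attach to any $p\in X_f$ a nonzero measured geodesic lamination $\mu$, obtained as a current limit of $f$-simple geodesics converging to $p$ in the Hausdorff sense, and then to argue that the geodesic through $p$ cannot cross $\supp(\mu)$ transversally, so that $p\in\hat\nu$ for any minimal component $\nu$ of $\mu$. Concretely, let $p\in X_f$; by the description of $X_f$ recalled in the introduction there exist pairwise distinct $f$-simple primitive closed geodesics $\gamma_n$ and points $p_n\in\gamma_n$ in $PT\Sigma$ with $p_n\to p$, and since $\Sigma$ carries only finitely many closed geodesics of length below any fixed bound, $L_n:=\ell_\Sigma(\gamma_n)\to\infty$. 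I view the $\gamma_n$ as geodesic currents, set $\bar\gamma_n:=\gamma_n/L_n$, and pass to a subsequence so that $\bar\gamma_n\to\mu$ in the space of geodesic currents, with $\ell_\Sigma(\mu)=1\ne 0$. Continuity and bilinearity of Bonahon's intersection pairing then yield
\[
\I(\mu,\mu)=\lim_n\I(\bar\gamma_n,\bar\gamma_n)=\lim_n\frac{\I(\gamma_n,\gamma_n)}{L_n^2}\le\lim_n\frac{f(L_n)}{L_n^2}=0,
\]
where the last equality uses $L_n\to\infty$ together with $\limsup_t f(t)/t=0$. Hence $\mu$ has vanishing self-intersection and so is a measured geodesic lamination; I fix any minimal component $\nu\in\CM\CL_{\min}(\Sigma)$, so that $\supp(\nu)\subset\supp(\mu)$.

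It remains to check $p\in\hat\nu$. Suppose for contradiction that the geodesic $\beta$ through $p$ crosses $\supp(\nu)$ (hence $\supp(\mu)$) transversally at some point of $\Sigma$. Since $X_f$ is invariant under the geodesic flow, I may replace $p$ by a suitable flow translate, and the $p_n$ by the corresponding flow translates on $\gamma_n$, and assume that the projection $q\in\Sigma$ of $p$ is itself a transverse crossing point of $\beta$ with $\supp(\mu)$. Choose a small flow box $B\subset\Sigma$ about $q$ in which $\mu\cap B$ has its local product structure, with ``horizontal'' leaves transverse to $\beta\cap B$, and fix a short arc $\tau\subset B$ through $q$ tangent to $\beta$ and carrying $\mu$-transverse mass $m>0$. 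By continuity of the geodesic flow and $p_n\to p$, for $n$ large $\gamma_n$ contains a ``vertical'' sub-arc $V_n\subset B$ close to $\beta\cap B$, transverse to every leaf of $\mu\cap B$. Meanwhile, current convergence $\bar\gamma_n\to\mu$ gives $\#(\gamma_n\cap\tau)/L_n\to m$, so $\gamma_n$ contains at least $\tfrac{m}{2} L_n$ ``horizontal'' sub-arcs crossing $B$ for $n$ large. A direct topological argument in the rectangle $B$ shows that every such horizontal sub-arc must meet $V_n$, producing at least $\tfrac{m}{2} L_n$ distinct self-intersections of $\gamma_n$ inside $B$. Hence $\I(\gamma_n,\gamma_n)\ge\tfrac{m}{2} L_n$, contradicting $f$-simplicity, since $\I(\gamma_n,\gamma_n)\le f(L_n)=o(L_n)$.

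The main obstacle is the previous paragraph, where two very different types of convergence have to be combined in a single geometric picture: Hausdorff convergence in $PT\Sigma$ produces a single transverse passage of $\gamma_n$ near $\beta$, while weak convergence of currents contributes linearly many passages of $\gamma_n$ along leaves of $\mu$, and the flow-box product structure of $\mu$ is what forces linearly many transverse crossings between the two families. The remaining ingredients—compactness of currents modulo rescaling, Bonahon's continuity of the intersection pairing, and the minimal-component decomposition of a measured lamination—are standard.
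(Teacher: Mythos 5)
Your proposal is correct and follows essentially the same route as the paper: normalize the $\gamma_n$ to unit-length currents, extract a limit $\mu$ with $\I(\mu,\mu)=0$ via bi-continuity and bi-homogeneity of the intersection form, and then derive a contradiction with sublinearity by showing that a transverse crossing of $\supp(\mu)$ would force at least $c\cdot\ell_\Sigma(\gamma_n)$ self-intersections of $\gamma_n$. The only (cosmetic) difference is that you run the final counting argument in a flow box on $\Sigma$, whereas the paper lifts to $\BH^2$ and uses a compact set $V$ of geodesics crossing the transversal with $\mu(V)>0$ and $\mu(\D V)=0$ --- the device you would need anyway to justify rigorously that the linearly many strands of $\gamma_n$ crossing $\tau$ really do traverse the box parallel to the leaves of $\mu$ and hence meet $V_n$; you also, correctly and slightly more carefully than the paper, pass to a minimal component of $\mu$ at the end.
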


In the course of the proof of Proposition \ref{prop-contained in X} we will have to work with geodesic currents.  Recall that identifying the universal cover of $\Sigma$ with the hyperbolic plane $\BH^2$ we obtain also an identification between the fundamental group $\pi_1(\Sigma)$ and a Fuchsian group $\Gamma\subset\PSL_2\BR$ with $\Sigma$ isometric to $\BH^2/\Gamma$. All those identifications also yield an identification between $PT\Sigma$ and $PT\BH^2/\Gamma$. The advantage of working with $PT\BH^2$ is that while the geodesic foliation of $PT\Sigma$ has dense leaves, that of $PT\BH^2$ has a very simple orbit structure. In fact, since geodesics in $\BH^2$ are determined by their endpoints in the boundary $\BS^1=\D_\infty\BH^2$, we have an identification between the space $\CG(\BH^2)$ of (unoriented) geodesics of $\BH^2$ and $(\BS^1\times\BS^1\setminus\Delta)/\text{flip}$ where the flip exchanges the end-points, or equivalently reverses the orientation of geodesics. 

By definition, a geodesic current on $\Sigma=\BH^2/\Gamma$ is a $\Gamma$-invariant Radon  measure on the space $\CG(\BH^2)$. Every geodesic current $\mu$ induces a Radon  measure $\mu\otimes dt$ on $PT\Sigma$ invariant under both the geodesic flow and the flip. In fact, this map yields an identification between currents and measures invariant under geodesic flow and flip, and this map is a homeomorphism when both spaces are endowed with the respective weak-*-topology.

Seeing primitive periodic orbits of the geodesic flow as invariant measures, we can thus consider individual primitive periodic geodesics as currents. In fact, the currents of the form $c\cdot\gamma$ where $c>0$ and $\gamma$ is a periodic geodesic are dense in the space $\CC(\Sigma)$ of all currents on $\Sigma$. Moreover, the length function $c\cdot\gamma\mapsto c\cdot\ell_\Sigma(\gamma)$ extends continuously to a proper function $\ell_\Sigma:\CC(\Sigma)\to\BR_+$. Similarly, there is a bi-continuous bi-homogenous symmetric map, called the intersection form
$$\I:\CC(\Sigma)\times\CC(\Sigma)\to\BR_+$$
which when evaluated on (the currents associated to) primitive periodic geodesics gives the number of transverse intersections. In particular, simple curves or more generally currents supported by laminations,~i.e.~measured laminations, have vanishing self-intersection number. Conversely, currents with vanishing self-intersection are measured laminations:
$$\CM\CL(\Sigma)=\{\lambda\in\CC(\Sigma)\vert\I(\lambda,\lambda)=0\}$$
We refer to \cite{Bonahon86,Bonahon88,Javi-Cris} for a thorough treatment of geodesic currents.

\begin{proof}[Proof of Proposition \ref{prop-contained in X}]
Recall that $X_f$ is the set of points in $PT\Sigma$ which belong to some Hausdorff limit of a sequence of pairwise distinct $f$-simple geodesics. In particular, the claim follows once we show that 
\begin{quote}
{\em if $(\gamma_n)$ is a sequence of $f$-simple curves with $\ell_\Sigma(\gamma_n)\to\infty$ and converging in the Hausdorff topology to some set $\lambda$, then there is $\mu\in\CM\CL_{\min}(\Sigma)$ with $\lambda\subset\hat\mu$.}
\end{quote}
To find the desired measured lamination $\mu$ consider the currents $\frac 1{\ell_\Sigma(\gamma_n)}\gamma_n$. Since they have unit length, and since the length function is proper on the space of currents $\CC(\Sigma)$, we can pass to a subsequence and assume that they converge to some current $\mu$ when $n\to\infty$:
$$\mu=\lim_{n\to\infty}\frac 1{\ell_\Sigma(\gamma_n)}\gamma_n.$$
Now notice that, by the bi-continuity and bi-homogeneity of the intersection form on the space of currents, we have
\begin{align*}
\I(\mu,\mu)&=\lim_{n\to\infty}\I\left(\frac 1{\ell_\Sigma(\gamma_n)}\gamma_n,\frac 1{\ell_\Sigma(\gamma_n)}\gamma_n\right)=
\lim_{n\to\infty} \frac 1{\ell_\Sigma(\gamma_n)^2}\I(\gamma_n,\gamma_n)\\
&\le\lim_{n\to\infty} \frac 1{\ell_\Sigma(\gamma_n)^2}f(\ell_\Sigma(\gamma))=0.
\end{align*}
This implies that the current $\mu$ is actually a measured lamination. We are going to show next that the Hausdorff limit $\lambda$ of the sequence $(\gamma_n)$ is contained in $\hat\mu$. Note that $\lambda$ is a union of complete geodesics to which we will refer as leaves. To prove that $\lambda\subset\hat\mu$ it suffices to prove that none of the leaves of $\lambda$ intersects the support of $\mu$ transversally. If that is not the case, we can find a geodesic segment $I\subset \lambda$ with endpoints in the complement of $\supp(\mu)$ and with $\I(I,\mu)>0$ . We lift the segment $I$ to the universal cover $\BH^2$ of $\Sigma$ and still denote it by $I$. Let $U\subset\CG(\BH^2)$ be the set of geodesics in $\BH^2$ which meet $I$ and let $V\subset U\setminus\D U$ be a compact subset with 
\begin{equation}\label{abcde}
\mu(V)>0\text{ and }\mu(\D V)=0.
\end{equation}
Note that the condition that $V$ is a compact subset of $U\setminus\D U$ implies that every geodesic in $V$ meets every arc in $\BH^2$ which is sufficiently close to $I$ and hence to suitably chosen arcs $I_n$ contained in lifts of $\gamma_n$. On the other hand we get from \eqref{abcde} and from the fact that the currents $\frac 1{\ell_\Sigma(\gamma_n)}\gamma_n$ converge to $\mu$ with respect to the weak-*-topology that
$$\lim_{n\to\infty}\left(\frac 1{\ell_\Sigma(\gamma_n)}\gamma_n\right)(V)=\mu(V)>0$$
Altogether, this implies that there is $c>0$ such that, for all $n$ large enough, the arc $I_n$ meets at least $c\cdot\ell_\Sigma(\gamma_n)$ lifts of $\gamma_n$. Since $I_n$ was itself contained in a lift of $\gamma_n$ we get that
$$\I(\gamma_n,\gamma_n)\ge c\cdot\ell_{\Sigma}(\gamma_n)$$
which contradicts the assumption that the geodesics $\gamma_n$ are $f$-simple for some sublinear function $f$.
\end{proof}

Having proved that $X_f\subset\BX$ if $f$ is sublinear, we turn now to the other inclusion:


\begin{prop}\label{prop-contains X}
If $f:\BR_+\to\BR_+$ is a monotone unbounded function then we have $\BX\subset X_f$. If moreover $\limsup_{t\to\infty}\frac{f(t)}t=\infty$, then $X_f=PT\Sigma$.
\end{prop}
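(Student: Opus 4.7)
The plan is to prove both claims by explicit construction: for a target $v\in PT\Sigma$, produce a sequence of distinct closed geodesics $\gamma_n$ with $\ell_\Sigma(\gamma_n)\to\infty$, with $v$ in their Hausdorff limit, and with $\I(\gamma_n,\gamma_n)$ small enough that $\I(\gamma_n,\gamma_n)\le f(\ell_\Sigma(\gamma_n))$ eventually. Since $f$ is monotone unbounded, any uniformly bounded $\I(\gamma_n,\gamma_n)$ suffices for the first claim; under the stronger superlinear assumption of the second claim, one can even afford $\I(\gamma_n,\gamma_n)$ growing linearly in $\ell_\Sigma(\gamma_n)$.

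For the inclusion $\BX\subset X_f$, fix $\mu\in\CM\CL_{\min}(\Sigma)$ and $v\in\hat\mu$, and use Thurston's density of rational points in $\CM\CL$: there exist simple closed geodesics $c_n$ with $c_n/\ell_\Sigma(c_n)\to\mu$ in $\CC(\Sigma)$. Minimality of $\mu$ then gives Hausdorff convergence $c_n\to\supp(\mu)$ in $PT\Sigma$ and forces $\ell_\Sigma(c_n)\to\infty$. If $v\in\supp(\mu)$ the curves $c_n$ themselves are the required sequence, being simple and hence $f$-simple. Otherwise $v$ lies in (the closure of) a complementary region $R$ of $\supp(\mu)$; the plan is to pick a closed geodesic $c\subset\bar R$ passing close to $v$, to arrange the $c_n$ inside a collar of $\supp(\mu)$ on the $R$-side so that $c_n$ is disjoint from $c$, and then to define $\gamma_n$ as the geodesic representative of $c_n\cdot\alpha\cdot c\cdot\alpha^{-1}$ with $\alpha\subset R$ a short connecting arc. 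The $\gamma_n$ lie in $\hat\mu$, their lengths diverge, bi-homogeneity of the intersection form on $\CC(\Sigma)$ gives $\I(\gamma_n,\gamma_n)\le\I(c,c)+O(1)$ uniformly in $n$, and their Hausdorff limit contains $\supp(\mu)\cup c$, hence $v$.

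For the superlinear case, fix $v\in PT\Sigma$ and pick a closed geodesic $\eta$ through an arbitrarily small neighborhood of $v$. Along a subsequence $L_n\to\infty$ with $f(L_n)/L_n\to\infty$, pick simple closed geodesics $\beta_n$ with $\ell_\Sigma(\beta_n)\sim L_n$ (abundant by Mirzakhani--Margulis counting). Letting $\gamma_n$ be the geodesic representative of $\eta$ concatenated with $\beta_n$ gives $\ell_\Sigma(\gamma_n)\sim L_n$, and
$$\I(\gamma_n,\gamma_n)\le\I(\eta,\eta)+2\I(\eta,\beta_n)+\I(\beta_n,\beta_n)\le C\cdot L_n,$$
using that $\I(\eta,\cdot)$ is bounded linearly in length (the fixed curve $\eta$ meets any geodesic at most $O(\ell_\Sigma)$ times) and that $\beta_n$ is simple. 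For $n$ large this is $\le f(L_n)$, so $\gamma_n$ is $f$-simple, and $v$ lies in the Hausdorff limit via the $\eta$-portion.

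The main obstacle, I expect, is the case of an irrational minimal lamination $\mu$ in the first claim: complementary regions of $\supp(\mu)$ are ideal polygons or crowns containing no closed geodesics, so $c$ cannot be chosen as a closed geodesic in $R$. Instead one would exploit the fact that geodesics in these regions must spiral onto $\mu$ to close up long arcs of $\gamma_v$ using the approximating curves $c_n$ themselves, and then verify carefully that the straightened loop stays in $\bar R$ (so as to remain in $\hat\mu$) and that its Hausdorff limit captures both $\supp(\mu)$ and a chosen segment of $\gamma_v$ near $v$.
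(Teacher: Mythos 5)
Your overall strategy -- build explicit closed curves whose dominant piece is simple (so the self-intersection budget stays small) and whose length diverges -- is the same as the paper's, but the proposal has two genuine gaps. The first you identify yourself but do not close: when $\mu$ is an irrational minimal lamination, the complementary regions carrying a point $v\in\hat\mu\setminus\supp(\mu)$ contain no closed geodesic $c$, and your sketch (``close up long arcs using the $c_n$ themselves and verify carefully...'') is not an argument. The paper avoids this entirely by a different decomposition: it first proves $\hat\eta\subset X_f$ for every \emph{simple closed curve} $\eta$ (where the complement is a genuine compact subsurface with totally geodesic boundary, so periodic orbits are dense in $\hat\eta$), and then shows (Lemma \ref{Lem:hats converge}) that $\hat\mu$ is contained in the Hausdorff limit of the sets $\hat\eta_n$ for simple closed curves $\eta_n\to\supp(\mu)$; since $X_f$ is closed, $\hat\mu\subset X_f$ follows without ever constructing curves inside ideal polygons or crowns. (Incidentally, your worry that the straightened loops must stay in $\hat\mu$ is unnecessary: the approximating geodesics only need to be $f$-simple, not to lie in $\BX$.)

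The second gap is in the claim that $v$ lies in the Hausdorff limit. Your $\gamma_n$ traverses the auxiliary geodesic $c$ (resp.\ $\eta$) exactly once, and a single traversal does not force the geodesic representative of the concatenation to pass arbitrarily close to $v$: the loop $\alpha\cdot c\cdot\alpha^{-1}$ is a ``spike'' attached to $c_n$, and if the angle of the corresponding geodesic loop at the basepoint is small, tightening cuts the spike off and $\gamma_n$ stays near $\supp(\mu)$, at a distance from $v$ that is at best bounded and certainly does not tend to $0$. This is precisely why the paper uses high powers: in Lemma \ref{Lem:curve hat} the curve wraps $2n$ times around $\gamma$ (forcing a long sub-segment of the lift to converge to the axis of $\gamma$, as verified by the endpoint argument in Fact \ref{fact1}) and the length is inflated by a Dehn-twist power $D_\eta^{a_n}$ \emph{without changing the self-intersection number}; in Lemma \ref{lem-I have a headache} the curve is $\gamma^n*\alpha^{a_n}$. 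Your construction can be repaired by replacing $c$ with $c^{k_n}$, $k_n\to\infty$, but then $\I(\gamma_n,\gamma_n)\asymp k_n^2\,\I(c,c)$, and you must choose $k_n$ growing slowly enough that this stays below $f(\ell_\Sigma(c_n))\to\infty$ -- exactly the bookkeeping the paper performs with the sequence $(a_n)$. A further small point: your assertion that minimality of $\mu$ forces $\ell_\Sigma(c_n)\to\infty$ fails when $\mu$ is itself a simple closed curve, and in the superlinear case you should arrange $\ell_\Sigma(\gamma_n)\ge L_n$ (not merely $\sim L_n$) before invoking monotonicity of $f$, since $f$ may jump.
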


Before launching the proof of Proposition \ref{prop-contains X} note that Theorem \ref{sat1} follows immediately from the second claim of the proposition:

\begin{named}{Theorem \ref{sat1}}
If $f:\BR_+\to\BR_+$ is such that $\limsup_{t\to\infty}\frac{f(t)}t=\infty$, then $X_f=PT\Sigma$ for every closed hyperbolic surface $\Sigma$.\qed
\end{named}

Similarly, Theorem \ref{sat2} follows when we combine Proposition \ref{prop-contained in X} and the first claim of Proposition \ref{prop-contains X}:

\begin{named}{Theorem \ref{sat2}}
If $\Sigma$ is a closed hyperbolic surface, then there is a set $\BX(\Sigma)\subset PT\Sigma$ with $X_f=\BX(\Sigma)$ whenever $f:\BR_+\to\BR_+$ is unbounded and satisfies $\limsup_{t\to\infty}\frac{f(t)}t=0$.\qed
\end{named}

The remaining of this section is dedicated to proving Proposition \ref{prop-contains X}. Assume  from now on that $f$ is unbounded. Recalling the definition \eqref{eq-setX} of $\BX$, note that to prove the inclusion $\BX\subset X_f$ it suffices to show that
$$\hat\mu\subset X_f$$
for every $\mu\in\CM\CL_{\min}(\Sigma)$. Let us first prove  this for the case when $\mu$ is a simple closed curve:

\begin{lem}\label{Lem:curve hat}
Suppose that $\eta\subset\Sigma$ is a simple closed curve and suppose that $f:\BR_+\to\BR_+$ is unbounded. Then we have $\hat\eta\subset X_f$.
\end{lem}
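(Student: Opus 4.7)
Let $v \in \hat\eta$ and write $\gamma_v$ for the geodesic through $v$. By definition $\gamma_v$ is either equal to $\eta$ or is entirely contained in the closure $\overline{\Sigma_0}$ of one of the components $\Sigma_0$ of $\Sigma\setminus\eta$. The strategy is to exhibit, for each such $v$, a sequence of pairwise distinct primitive closed geodesics $\gamma_n$ in $\Sigma$ with $\ell_\Sigma(\gamma_n)\to\infty$, uniformly bounded self-intersection, and Hausdorff limit containing $v$; since $f$ is unbounded and monotonic, $f(\ell_\Sigma(\gamma_n))$ eventually dominates $\I(\gamma_n,\gamma_n)$, so the $\gamma_n$ are $f$-simple for all sufficiently large $n$ and witness $v\in X_f$.

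The main tool is the Dehn twist $T_\eta$ along $\eta$. Fixing an auxiliary simple closed geodesic $\delta\subset\Sigma$ that intersects $\eta$ transversally, I would set $\gamma_n = T_\eta^n(\delta)$. Because $T_\eta$ is a homeomorphism preserving simplicity and primitivity, each $\gamma_n$ is a primitive simple closed geodesic with $\I(\gamma_n,\gamma_n)=0$ and length $\ell_\Sigma(\gamma_n)\sim n\cdot\I(\eta,\delta)\cdot\ell_\Sigma(\eta)\to\infty$. Crucially, $T_\eta$ is supported in a tubular neighborhood $N$ of $\eta$, so outside $N$ each $\gamma_n$ coincides with $\delta$, while inside $N$ the curve $\gamma_n$ spirals around $\eta$ with frequency $n$. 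If $\gamma_v=\eta$, the increasing spiraling forces the Hausdorff limit of the $\gamma_n$ to contain $\eta$, hence $v$; any $\delta$ transverse to $\eta$ works. If instead $\gamma_v\subset\overline{\Sigma_0}$ with $\gamma_v\neq\eta$, one chooses $\delta$ so that $\delta\cap(\Sigma\setminus N)$ passes through a prescribed small neighborhood of $v$, ensuring $v$ lies in every $\gamma_n$ for $n$ large, and hence in the Hausdorff limit.

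The delicate step is producing such a $\delta$ when $\gamma_v$ does not lie on any simple geodesic in $\Sigma$, for instance when $\gamma_v$ has transverse self-intersections. In that regime no simple closed geodesic can pass through $v$, so the construction above with $\I(\gamma_n,\gamma_n)=0$ cannot cover $v$. Here I would exploit that $f$ is only required to be unbounded: there is room to allow $\I(\gamma_n,\gamma_n)$ to grow slowly with $n$. Concretely, replace $\delta$ by a short closed geodesic $\alpha$ in $\Sigma_0$ passing near the footpoint of $v$ with tangent direction close to $v$ (possible by density of closed orbits in the geodesic flow of $\Sigma_0$), and form the geodesic representative $\gamma_n$ of the free homotopy class $\alpha\cdot\eta^n$ with appropriate basepoint; its length grows like $n\,\ell_\Sigma(\eta)+\ell_\Sigma(\alpha)$, the portion near $v$ shadows $\alpha$ and thus accumulates at $v$, and the self-intersection stays controlled in terms of $\I(\alpha,\alpha)$ plus a linear term from the transitions at $\partial N$. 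The main technical obstacle is to verify this self-intersection estimate and then to pass to a diagonal sequence, shrinking the neighborhood around $v$ (letting $\alpha$ become more complicated) while making $n$ grow fast enough that $f(\ell_\Sigma(\gamma_n))$ still dominates $\I(\gamma_n,\gamma_n)$; the unboundedness of $f$ is exactly what makes such a diagonal possible.
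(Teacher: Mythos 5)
Your construction breaks down at the crucial quantitative step, namely in the case of a direction $v$ whose geodesic $\gamma_v$ is not simple (which is the bulk of $\hat\eta$). For the curves $\gamma_n\simeq\alpha\cdot\eta^n$ --- more precisely $\alpha*\omega*\eta^{n}*\bar\omega$ for some connecting arc $\omega$, since $\alpha\subset\Sigma_0$ and $\eta$ are disjoint --- the self-intersection number is \emph{not} eventually dominated by $f(\ell_\Sigma(\gamma_n))$ for a general unbounded $f$: the strand winding $n$ times around the collar of $\eta$ must be crossed by the arcs entering and leaving that collar, so $\I(\gamma_n,\gamma_n)$ grows linearly in $n$, i.e.\ exactly as fast as $\ell_\Sigma(\gamma_n)\sim n\,\ell_\Sigma(\eta)$. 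If $f$ is unbounded but sublinear, say $f(t)=\log t$, then $f(\ell_\Sigma(\gamma_n))\sim\log n$ while $\I(\gamma_n,\gamma_n)\gtrsim n$, and no choice of $n$ and no diagonal argument makes these curves $f$-simple. Unboundedness of $f$ only lets you beat a self-intersection count that stays \emph{fixed} while the length grows; your construction couples the two quantities. The missing idea --- the heart of the paper's proof --- is to decouple them with a Dehn twist: the paper first reduces to closed geodesics $\gamma\in\hat\eta$ (by density of periodic orbits in the compact surface $\overline{\Sigma\setminus\eta}$ and closedness of $X_f$, which plays the role of your approximation of $v$ by closed geodesics $\alpha$), then sets $\beta_n=D_\eta^{a_n}(\gamma^{2n}*\omega)$. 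Since $D_\eta$ is a homeomorphism, $\I(\beta_n,\beta_n)=\I(\gamma^{2n}*\omega,\gamma^{2n}*\omega)$ is independent of $a_n$, while $\ell_\Sigma(\beta_n)\gtrsim a_n$; monotonicity and unboundedness of $f$ then allow one to choose $a_n$ so large that $\beta_n$ is $f$-simple, and the Hausdorff limit contains $\gamma$ for \emph{any} choice of $a_n$. (Compare with the paper's Lemma 2.7: a construction of your type, with a cross term linear in the length, is exactly what is used there and exactly why that lemma needs $f$ superlinear.)

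There is also a secondary problem in your first case: you assert that the geodesic representative of $T_\eta^n(\delta)$ passes through prescribed points of $\delta\cap(\Sigma\setminus N)$ for large $n$. This holds for the topological representative supported outside $N$, but not for the geodesic representatives: their Hausdorff limit is $\eta$ together with finitely many leaves spiraling onto $\eta$ at both ends, determined by $\delta$ and $\eta$, and it does not contain $\delta\setminus N$. This is harmless when $\gamma_v=\eta$, and when $\gamma_v$ is a simple closed geodesic disjoint from $\eta$ you could instead twist about $\gamma_v$ itself, but it does not give the general case.
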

\begin{proof}
The set $\hat\eta$ can be identified with the set of all bi-infinite geodesics  in the completion $\overline{\Sigma\setminus\eta}$ of the complement of $\eta$ (compare with figure \ref{Fig:Sandwich}). The surface $\overline{\Sigma\setminus\eta}$ is compact, hyperbolic and with totally geodesic boundary. In particular, the periodic orbits of the geodesic flow are dense in the set $\hat\eta$, meaning that every geodesic $\alpha$ in $\hat\eta$ is contained in the Hausdorff limit of a sequence of closed geodesics  disjoint from $\eta$. Note also that, since $X_f$ is by definition a closed subset of $PT\Sigma$, we see that to prove that $\alpha$ is contained in $X_f$ it suffices to show that any closed  geodesic in $\hat\eta$ is contained in $X_f$. 

Let $\gamma\in \hat\eta$ be a closed geodesic. We will prove $\gamma\in X_f$. Fix a base point $p\in\gamma$  and fix a geodesic arc $\omega$ based at $p$ and which meets $\eta$ non-trivially. Fix orientations of $\gamma$ and $\omega$. Choose also a sequence of integers $(a_n)$ whose order of growth we will determine later. And consider the geodesics freely homotopic to the element 
$$\beta_n=D_\eta^{a_n}(\gamma^{2n}*\omega)$$
where $D_\eta$ is the Dehn twist around $\eta$.

\begin{fact}\label{fact1}
For any choice of $(a_n)$ we have that the geodesic $\gamma$ is contained in the Hausdorff limit of the sequence $\beta_n$.
\end{fact}
\begin{proof}
 Let $\tilde\gamma$ be any lift of $\gamma$ to $\mathbb H^2$. It suffices to find a sequence of lifts 
 $\tilde\beta_n$ of $\beta_n$ such that the corresponding endpoints of  $\tilde\beta_n$ converge to those of $\tilde\gamma$. In fact we will
 lift the actual closed path $\beta_n=\eta^{a_n}(\gamma^{2n}*\omega)$, since lifts of homotopic curves have the same endpoints. 
 
   \begin{figure}[ht]
  \setlength{\unitlength}{0.01\linewidth}
  \begin{picture}(100, 55)
  \put(25,0){
  \includegraphics{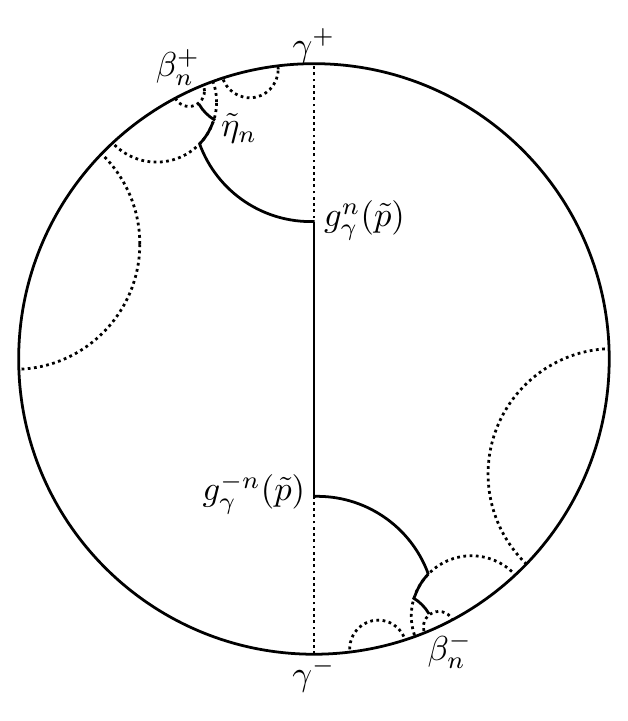}
  }
  \end{picture}
  \caption{A lift of $\beta_n$ to the universal cover.}
  \label{Fig:Sandwich}
  \end{figure}

 Fix a lift $\tilde p\in \tilde \gamma$ of $p$. Denote by $\gamma^+$ and $\gamma^-$ the endpoints of $\tilde\gamma$, and let $g_\gamma$ be the deck transformation
 with axis $\tilde\gamma$, translation length $\ell_\Sigma(\gamma)$ and attracting point $\gamma^+$. 
 Then for any $n\geq 1$ let $\tilde\beta_n$ be the lift
 of $\beta_n$ that contains the segment $[g_\gamma^{n}(\tilde p),g_\gamma^{-n}(\tilde p)]\subset \tilde\gamma$, and denote $\beta^+_n$ and
$ \beta^-_n$ its endpoints at infinity. Let 
 $\delta_n$ be  the connected component of $\tilde\beta_n\setminus [g_\gamma^{n}(\tilde p),g_\gamma^{-n}(\tilde p)]$ 
 with endpoints $g_\gamma^{n}(\tilde p)$ and  $\beta^+_n$. The path $\delta_n$ from $g_\gamma^{n}(\tilde p)$
 goes a bounded distance along a lift of $\omega$, until it reaches  a lift $\tilde\eta_n$ of $\eta$, travels along that geodesic, eventually exiting on the other side of $\tilde\eta_n$ and continuing along another lift of $\omega$ and so on. In particular, $\delta_n$ does not intersect $\tilde\eta_n$ again. Hence
 $\beta^+_n$ is separated from $\beta^+$ by $\tilde\eta_n$. Since $\tilde\eta_{n+1}$ is the image of $\tilde\eta_n$ under $g_\gamma$, the endpoints
 of the geodesic lines $\tilde\eta_n$, and hence the points $\beta^+_n$, converge to $\beta^+$. Using a similar argument we can show that
 the sequence  $\beta^-_n$ converges to $\gamma^-$. This finishes the proof. 
 \end{proof}

In light of Fact \ref{fact1} and of the fact that $X_f$ is closed we see that to prove  $\gamma\subset X_f$ it suffices to prove that $\beta_n\in X_f$ for all large $n$. 

To prove this, note that the self-intersection number of $\beta_n$ is independent of the choice of $(a_n)$, since $D_\eta$ is a homeomorphism and therefore
$$\I(\beta_n,\beta_n)=\I(\gamma^{2n}*\alpha,\gamma^{2n}*\alpha).$$ On the other hand the length of $\beta_n$ is bounded below by a multiple of $a_n$. It follows  that if we choose the numbers $a_n$ growing fast enough, then $\beta_n$ is $f$-simple for $n$ large enough. This completes the proof of Lemma \ref{Lem:curve hat}.
\end{proof}

A similar argument proves the following Lemma.

\begin{lem}\label{lem-I have a headache}
Suppose that $f$ is superlinear. Then every $\gamma\subset\Sigma$ closed is contained in $X_f$.
\end{lem}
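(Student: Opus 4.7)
The plan is to repeat, with minimal modifications, the construction from the proof of Lemma~\ref{Lem:curve hat}, allowing now that $\gamma$ crosses the auxiliary simple closed curve $\eta$. Fix any simple closed geodesic $\eta\subset\Sigma$, a basepoint $p\in\gamma$, and a short geodesic arc $\omega$ based at $p$ meeting $\eta$ transversally at one interior point. For a sequence $(a_n)$ of positive integers to be chosen later, set
$$\beta_n=D_\eta^{a_n}(\gamma^{2n}*\omega).$$
Because $D_\eta$ is a homeomorphism of $\Sigma$, the quantity $\I(\beta_n,\beta_n)=\I(\gamma^{2n}*\omega,\gamma^{2n}*\omega)$ is independent of $a_n$ and grows at most quadratically in $n$, while $\ell_\Sigma(\beta_n)$ grows linearly in $a_n$.

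First, I would verify that the proof of Fact~\ref{fact1} still shows that $\gamma$ lies in the Hausdorff limit of $(\beta_n)$, regardless of the choice of $(a_n)$. The argument never used that $\gamma\cap\eta=\emptyset$: it relied only on $\omega$ crossing $\eta$ and on $g_\gamma$ being the hyperbolic deck transformation with axis $\tilde\gamma$. The forward ray $\delta_n^+$ leaving $g_\gamma^n(\tilde p)$ first meets the lift $\tilde\eta_n=g_\gamma^n(\tilde\eta_0)$ of $\eta$, where $\tilde\eta_0$ is the fixed lift determined by the crossing of $\omega$ with $\eta$, and is thereby separated from $\gamma^+$ by $\tilde\eta_n$. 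Since $g_\gamma^n(\tilde\eta_0)\to\gamma^+$, the endpoint $\beta_n^+$ converges to $\gamma^+$, and symmetrically $\beta_n^-\to\gamma^-$.

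Second, I would use the superlinearity and monotonicity of $f$ to arrange that every $\beta_n$ be $f$-simple. Since $\limsup_{t\to\infty}f(t)/t=\infty$ forces $f$ to be unbounded, for each $n$ there exists a threshold $T_n$ with $f(T_n)\ge\I(\gamma^{2n}*\omega,\gamma^{2n}*\omega)$; picking $a_n$ large enough that $\ell_\Sigma(\beta_n)\ge T_n$ yields $f(\ell_\Sigma(\beta_n))\ge\I(\beta_n,\beta_n)$ by monotonicity. The curves $\beta_n$ lie in pairwise distinct free homotopy classes and their lengths tend to infinity, so combining this with Fact~\ref{fact1} we conclude $\gamma\subset X_f$. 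The one point requiring care is the verification that the combinatorial picture of the lift in Fact~\ref{fact1} survives the possible intersections of $\gamma$ with $\eta$; everything else is a routine adaptation.
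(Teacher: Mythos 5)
There is a genuine gap, and it sits exactly at the point you flag as ``requiring care'': the proof of Fact~\ref{fact1} \emph{does} use $\gamma\cap\eta=\emptyset$ in an essential way. That hypothesis is what allows one to identify $D_\eta^{a_n}(\gamma^{2n}*\omega)$ with the based loop $\eta^{a_n}*(\gamma^{2n}*\omega)$, whose lift genuinely contains the long segment $[g_\gamma^{n}(\tilde p),g_\gamma^{-n}(\tilde p)]\subset\tilde\gamma$. If $\gamma$ meets $\eta$ in $k>0$ points, the twist inserts a copy of $\eta^{\pm a_n}$ at each of the roughly $2nk$ intersection points of $\gamma^{2n}$ with $\eta$, not only at the single crossing on $\omega$. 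The resulting representative follows $\tilde\gamma$ only for the bounded stretches between consecutive lifts of $\eta$, each interrupted by an excursion of length about $a_n\ell_\Sigma(\eta)$ along a lift of $\eta$; since your second step forces $a_n\to\infty$, the endpoints of $\tilde\beta_n$ are pulled toward endpoints of lifts of $\eta$ rather than toward $\gamma^{\pm}$, and $\gamma$ is not contained in the Hausdorff limit of the $\beta_n$. Note that the whole content of the lemma is the case where $\gamma$ crosses \emph{every} simple closed geodesic (e.g.\ a filling closed geodesic); if some simple $\eta$ misses $\gamma$ you are already done by Lemma~\ref{Lem:curve hat}.

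A sanity check shows the argument must break somewhere: as you yourself observe, your second step uses only that $f$ is unbounded and monotone, never superlinearity, because $\I(\beta_n,\beta_n)$ is independent of $a_n$. If the construction were valid it would therefore place every closed geodesic in $X_f$ for every unbounded monotone $f$; since $X_f$ is closed and closed geodesics are dense in $PT\Sigma$, this would give $X_f=PT\Sigma$ for every unbounded $f$, contradicting Theorem~\ref{sat2} together with Theorem~\ref{prop-H-bound} (and, directly, Proposition~\ref{prop-contained in X}: a filling closed geodesic does not lie in $\BX$). The paper avoids the problem by not twisting at all: it takes $\beta_n=\gamma^{n}*\alpha^{a_n}$ for a simple closed geodesic $\alpha$ through $p$, a concatenation which really does contain $\gamma^{n}$ as a subpath, so the Hausdorff convergence survives. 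The price is that $\I(\beta_n,\beta_n)$ now grows like $a_n\,n\,\I(\alpha,\gamma)$, i.e.\ linearly in $a_n$ and hence linearly in $\ell_\Sigma(\beta_n)$ with a slope growing in $n$ --- which is exactly where the superlinearity of $f$ is needed.
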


Note that we are not claiming that $\gamma$ is $f$-simple, only that it is contained in the Hausdorff limit of a sequence of $f$-simple geodesics.

\begin{proof}
Let $\gamma$ be any closed geodesic in $\Sigma$. As in Lemma \ref{Lem:curve hat} we will construct a sequence of $f$-simple curves $\beta_n$
whose Hausdorff limit contains $\gamma$. Fix a point $p\in\gamma$ so that there is a simple closed geodesic $\alpha$  passing through $p$ that is 
distinct from $\gamma$. Fix orientations for  $\gamma$ and $\alpha$, and let $(a_n)$ be a sequence of integers, rate of growth to be determined later.  Define $\beta_n$ to be a closed curve that, starting at $p$, travels $n$ times about $\gamma$ and then $a_n$ times about $\alpha$, that is
$$\beta_n=\gamma^n * \alpha^{a_n}.$$ 
The argument that $\gamma$ is contained in the Hausdorff limit of $\beta_n$ is essentially the same as in Fact \ref{fact1}. 
We now claim that taking $a_n$  to be large enough one can ensure that $\beta_n$ is $f$-simple. 
Indeed, the self-intersection number of $\beta_n$ satisfies for some $C$ independent of $n$
$$\I(\beta_n,\beta_n)\leq C (\I(\gamma^n,\gamma^n)+a_nn\I(\alpha,\gamma)).$$
On the other hand, the length of $\beta_n$ satisfies
$$\ell_\Sigma(\beta_n)\geq C'a_n$$ for some $C'$ independent of $n$. Given a superlinear function $f$, we can now choose $a_n$ to guarantee
$$\I(\beta_n,\beta_n)\leq f(\ell_\Sigma(\beta_n))$$ for large enough $n$. 
\end{proof}

So far we have only considered the set $\hat\mu$ when $\mu$ is a simple closed curve. To consider the case of a general $\mu\in\CM\CL_{\min}(\Sigma)$ note that, since $\mu$ is minimal, we can approximate it by simple closed curves $\eta_n$ in the Hausdorff topology. We discuss next the relation between the sets $\hat\mu$ and $\hat\eta_n$:

\begin{lem} \label{Lem:hats converge}
Given $\mu\in\CM\CL_{\min}(\Sigma)$, suppose that $\eta_n$ is a sequence of simple closed curves converging in the Hausdorff topology to $\supp(\mu)$ and suppose that the sets $\hat\eta_n$ converge in the Hausdorff topology to some set $\lambda$. Then $\hat\mu\subset\lambda$.
\end{lem}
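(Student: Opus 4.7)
I plan to verify that every $v\in\hat\mu$ arises as a limit in $PT\Sigma$ of points $v_n\in\hat\eta_n$; this is exactly what membership in $\lambda=\lim\hat\eta_n$ demands. Write $\alpha$ for the complete geodesic through $v$. Since $\alpha$ does not cross $\supp(\mu)$ transversally and two geodesics tangent at a point must coincide, either $\alpha$ is a leaf of $\supp(\mu)$ or $\alpha$ is entirely contained in a single complementary region $\Omega$ of $\supp(\mu)$. Two easy sub-cases dispose of most of the work: if $\alpha$ is a leaf, then $v\in\supp(\mu)\subset PT\Sigma$ and the Hausdorff convergence $\eta_n\to\supp(\mu)$ together with $\eta_n\subset\hat\eta_n$ provides $v_n\in\eta_n$ with $v_n\to v$; and if $\alpha\subset\Omega$ is a closed geodesic it is compact and at positive distance from $\supp(\mu)$, hence disjoint from $\eta_n$ for large $n$, so $v_n=v$ works.

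The substantive case is $\alpha\subset\Omega$ non-closed. The plan is to approximate $v$ by points on closed geodesics sitting inside the complementary regions of $\eta_n$. Parametrize so $v=\alpha(0)$, fix $L$ large, and consider the arc $\alpha|_{[-L,L]}$. This arc is a compact subset of $\Omega$, hence for $n$ large (depending on $L$) it is contained in a single component $C_n$ of $\Sigma\setminus\eta_n$. Since $\overline{C_n}\subset\Sigma$ has diameter at most $D:=\diam(\Sigma)$, I concatenate $\alpha|_{[-L,L]}$ with a path $\delta_n\subset C_n$ of length $\le D$ joining $\alpha(L)$ to $\alpha(-L)$, producing a loop $\gamma_n^L\subset C_n$ whose lift to $\BH^2$ is a $(1,D)$-quasi-geodesic. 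For $L>D$ this lift is unbounded, so $\gamma_n^L$ is non-contractible; let $\beta_n^L\subset\Sigma$ be its closed geodesic representative, that is, the projection of the axis of the associated deck transformation $g\in\pi_1(\overline{C_n})\subset\pi_1(\Sigma)$.

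It remains to check that $\beta_n^L\in\hat\eta_n$ and that $\beta_n^L$ meets every neighborhood of $v$ in $PT\Sigma$. For the first, the universal cover of $\overline{C_n}$ embeds as a closed convex set $\widetilde{C_n}\subset\BH^2$ with bi-infinite geodesic boundary (lifts of $\eta_n$); the fixed points at infinity of $g$ lie in $\overline{\widetilde{C_n}}$, so convexity forces the axis of $g$ into $\widetilde{C_n}$, and hence $\beta_n^L\subset\overline{C_n}$ does not cross $\eta_n$ transversally. For the second, the Morse stability of quasi-geodesics in $\BH^2$ places a lift of $\gamma_n^L$ within a distance $C=C(D)$ of the axis; because two geodesics in $\BH^2$ remaining a bounded distance apart over a long interval are exponentially close in the middle, the axis is within $O(e^{-cL})$ of the lift of $\alpha|_{[-L/2,L/2]}$, both in position and direction. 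Projecting back to $\Sigma$ and letting $L\to\infty$ along a diagonal $n=n(L)\to\infty$ yields $v_n\in\beta_n^L\subset\hat\eta_n$ with $v_n\to v$ in $PT\Sigma$. The main obstacle is simultaneously controlling the non-triviality of $\gamma_n^L$ and the Morse constants uniformly in $n$; both are resolved by the detour having length at most the $n$-independent constant $D=\diam(\Sigma)$.
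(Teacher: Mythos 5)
Your overall strategy (approximate $v$ by elements of $\hat\eta_n$ built from a long arc of $\alpha$) is the same as the paper's, and the easy cases (leaf of $\supp(\mu)$; $\alpha$ at positive distance from $\supp(\mu)$) are handled correctly. But the substantive case has a genuine gap at the step where you close up $\alpha|_{[-L,L]}$ by a path $\delta_n\subset C_n$ of length at most $D=\diam(\Sigma)$. The bound $\diam(\overline{C_n})\le\diam(\Sigma)$ is for the \emph{extrinsic} distance in $\Sigma$; a path of that length joining $\alpha(L)$ to $\alpha(-L)$ has no reason to stay inside $C_n$, and what you actually need is a bound on the \emph{intrinsic} diameter of $C_n$. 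That quantity is not uniformly bounded: since $\eta_n\to\supp(\mu)$ in the Hausdorff topology one has $\ell(\eta_n)\to\infty$ while $\vol(C_n)\le\vol(\Sigma)$, so the components $C_n$ become long and thin and their intrinsic diameter blows up. Worse, even inside the fixed complementary region $\Omega$ the distance from $\alpha(L)$ to $\alpha(-L)$ measured within $\Omega$ can grow like $2L$ (e.g.\ when $\Omega$ is an ideal polygon and $\alpha$ is a diagonal exiting through two spikes, one must retrace the spikes to connect the endpoints). So there is no detour of length bounded independently of $n$ \emph{and} $L$, and with an unbounded detour the loop $\gamma_n^L$ need not be a quasi-geodesic with uniform constants (it can essentially backtrack), which destroys both the non-triviality argument and the Morse-lemma comparison of the axis with $\alpha|_{[-L/2,L/2]}$. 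Everything you do after that point (convexity of $\widetilde{C_n}$ forcing the axis into $\overline{C_n}$, exponential fellow-travelling) is fine, but it all rests on the uniform quasi-geodesic constants you do not have.

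The paper avoids this by not closing the arc into a loop at all: either $\alpha$ is disjoint from $\eta_{n}$ for infinitely many $n$, in which case $\alpha\in\hat\eta_n$ directly, or one extends the arc along $\alpha$ in both directions until it first meets $\eta_{n_k}$, turns onto $\eta_{n_k}$ with angle at most $\pi/2$, and wraps around $\eta_{n_k}$ forever. The geodesic representative of this bi-infinite path is disjoint from $\eta_{n_k}$, hence lies in $\hat\eta_{n_k}$, and its endpoints at infinity converge to those of a lift of $\alpha$ because the separating lifts of $\eta_{n_k}$ recede to the endpoints of $\tilde\alpha$. If you want to keep your closed-up-loop idea, you would need to produce a connecting path whose length is controlled relative to $L$ uniformly in $n$, and as it stands no such control is available.
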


Actually we have $\lambda=\hat\mu$, but we will only need the given inclusion.

\begin{proof}
To simplify notation we will drop the distinction between the measured lamination $\mu$ and its support $\supp(\mu)$. 

Let $\gamma$ be a geodesic which does not intersect $\mu$ transverselly. To prove  $\gamma\in\lambda$ it suffices  to
find a sequence of geodesics $\gamma_k\in \hat\eta_{n_k}$ for some $n_k\to\infty$, whose Hausdorff limit contains $\gamma$. Noting that we can choose $\gamma_k=\eta_k$ if $\gamma\subset\mu$, we suppose from now on that this is not the case, that is that $\gamma\cap\mu=\emptyset$.

If the distance between $\gamma$ and $\mu$ is positive, then for $n$ large enough $\gamma$ is in $\hat\eta_n$.
Hence we will assume that $\gamma$ comes arbitrarily close to $\mu$. In particular $\gamma$ is not a closed curve.  Both ends of $\gamma$
could get arbitrary close to $\mu$, or only one of them.  We assume the first option, the argument is easily modifiable for the second one.
Fix $p\in \gamma$ and denote $\tau_k$ the segment of $\gamma$ centered at $p$ and of length $k$. Since $\tau_k$ is disjoint from
some neighbourhood of  $\mu$, it is disjoint from all $\eta_n$  for $n$ large enough. Let $\gamma_k$ be a bi-infinite path that contains $\tau_k$, continues along $\gamma$ in both directions until it hits some $\eta_{n_k}$ where $n_k$ is large enough,  then turns on $\eta_{n_k}$ so that the angle it makes is at most $\pi/2$, and wraps around $\eta_{n_k}$ for the rest of its life. 
The geodesic representative of $\gamma_n$ is disjoint from $\eta_{n_k}$, and hence is in $\hat\eta_{n_k}$. 

To see that $\gamma$ is contained in the Hausdorff limit of $\gamma_k$ we argue basically as in the proof of Fact \ref{fact1}. That is, we fix a lift $\tilde p$ of $p$ in the universal cover,
and consider the lifts  $\tilde\gamma$ and $\tilde\gamma_k$ of $\gamma$ and $\gamma_k$ through $\tilde p$. 
 Starting at $\tilde p$ in either direction, $\tilde \gamma_k$ goes along $\tilde\gamma$ distance at least 
$k/2$, and  turns an angle of at most $\pi/2$ to continue along a lift of $\eta_{n_k}$. Hence
the endpoints of $\tilde\gamma_k$ converge to the corresponding endpoints of $\tilde\gamma$. This finishes the proof. 
\end{proof}

Combining the lemmas above  we can prove Proposition \ref{prop-contains X}.

\begin{proof}[Proof of Proposition \ref{prop-contains X}]
Suppose that we are given $\mu\in\CM\CL_{\min}(\Sigma)$ and let $(\eta_n)$ be a sequence of simple closed curves which converge to $\supp(\mu)$ in the Hausdorff topology. Note also that, up to passing to a subsequence, we can assume that the sets $(\hat\eta_n)$ converge in the Hausdorff topology to some set $\lambda$. By Lemma \ref{Lem:curve hat} we have that $\hat\eta_n\subset X_f$ for all $n$. Since $X_f$ is closed we  also have that $\lambda\subset X_f$. Now, Lemma \ref{Lem:hats converge} asserts that $\hat\mu\subset\lambda$ and hence that $\hat\mu\subset X_f$. Since $\mu\in\CM\CL_{\min}(\Sigma)$ was arbitrary, we have proved that
$$\BX=\cup_{\mu\in\CM\CL_{\min}(\Sigma)}\hat\mu\subset X_f.$$
Now, if $f$ is superlinear then we also get from Lemma \ref{lem-I have a headache} that every periodic geodesic belongs to $X_f$. Since the periodic geodesics are dense in $PT\Sigma$ and since $X_f$ is closed we get thus that $PT\Sigma\subset X_f$, as we needed to prove.
\end{proof}

\section{Hausdorff dimension of the set $\BX$}\label{sec:H-dimension}
In this section we will give an estimate of the Hausdorff dimension of the set $\BX=\BX(\Sigma)$ defined in \eqref{eq-setX}. As a first step we observe that it basically will suffice to estimate the Hausdorff dimension of the set
$$X'(\Sigma)=\bigcup_{\eta\in\CS(\Sigma)}\hat\eta,$$
where $\CS(\Sigma)$ is the set of all simple closed geodesics in $\Sigma$. In fact we have:

\begin{lem}\label{lem-simplerX}
For every closed surface $\Sigma$ there is a 3-to-1 cover $\Sigma'\to\Sigma$ with 
$$\dim_\CH X'(\Sigma)\le\dim_\CH\BX(\Sigma)\le \dim_\CH X'(\Sigma')$$
\end{lem}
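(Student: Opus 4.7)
The first inequality is essentially free: every simple closed geodesic $\eta$ supports an atomic transverse measure defining a minimal measured lamination, so $\CS(\Sigma)\subset\CML_{\min}(\Sigma)$ and hence $X'(\Sigma)\subset\BX(\Sigma)$ as subsets of $PT\Sigma$, immediately giving the corresponding inequality of Hausdorff dimensions.

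For the second inequality, the plan is to split the union defining $\BX(\Sigma)$ according to whether $\mu\in\CML_{\min}(\Sigma)$ fills $\Sigma$ or not. When $\mu$ does not fill $\Sigma$, its support is contained in a proper subsurface $S\subsetneq\Sigma$ with nonempty geodesic boundary $\partial S$, and every geodesic in $\hat\mu$ (whether a leaf of $\mu$, a geodesic trapped in a complementary disk inside $S$, or a geodesic lying outside $S$) fails to cross $\partial S$ transversely. Thus $\hat\mu\subset\hat\eta$ for any component $\eta\subset\partial S$, putting the non-filling contribution to $\BX(\Sigma)$ inside $X'(\Sigma)$. Pulling back under any cover $\pi:\Sigma'\to\Sigma$ further places it inside $X'(\Sigma')$, since the preimage of a simple closed curve on $\Sigma$ is a multicurve on $\Sigma'$ whose complement is contained in the complement of each of its components.

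For filling $\mu$, where $\hat\mu=\supp(\mu)$ because the complementary regions are disks, the idea is to pick a regular 3-to-1 cover $\pi:\Sigma'\to\Sigma$ with deck group $\BZ/3$ so that $\pi^{-1}\mu$ is never minimal on $\Sigma'$: it then decomposes into minimal components $\mu_i'$ (at most three of them) permuted by the deck group. Each $\mu_i'$ necessarily fails to fill $\Sigma'$, since its complement contains the other components, which carry non-trivial leaves and thus preclude a disk decomposition. Applying the non-filling argument on $\Sigma'$ now gives $\supp(\mu_i')\subset\hat{\eta_i'}\subset X'(\Sigma')$ for a suitable simple closed geodesic $\eta_i'\subset\Sigma'$, and unioning over $i$ yields $\pi^{-1}\supp(\mu)\subset X'(\Sigma')$. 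Together with the non-filling case this proves $\pi^{-1}\BX(\Sigma)\subset X'(\Sigma')$; since $\pi$ is a locally isometric finite cover, and hence bi-Lipschitz between the compact spaces $PT\Sigma'$ and $PT\Sigma$, we conclude $\dim_\CH\BX(\Sigma)=\dim_\CH\pi^{-1}\BX(\Sigma)\le\dim_\CH X'(\Sigma')$.

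The step I expect to be the main obstacle is the construction of the 3-to-1 cover with the promised splitting property, i.e.\ arranging that no filling minimal lamination on $\Sigma$ has a connected preimage on $\Sigma'$. The natural attempt is via a surjection $\rho:\pi_1(\Sigma)\to\BZ/3$ chosen so that the transverse holonomy of every filling $\mu$ lies in $\ker\rho$; since a filling lamination is a Hausdorff limit of simple closed curves, this should reduce to a condition on the mod-$3$ homology classes of suitable approximating curves, the delicate point being to enforce the splitting uniformly over the entire (uncountable) family of filling minimal laminations.
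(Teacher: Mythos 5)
Your first inequality is fine, and your instinct to separate the filling laminations from the non-filling ones matches the skeleton of the paper's argument; but both halves of your treatment of the second inequality have genuine gaps. In the non-filling case, the assertion that every geodesic in $\hat\mu$ fails to cross $\D S$ transversely, so that $\hat\mu\subset\hat\eta$ for a component $\eta$ of $\D S$, is false. Let $U$ be a non-polygonal complementary component of $\mu$ and let $\eta$ be a boundary geodesic of the Nielsen core $N(\Sigma^U)$ of the associated cover, i.e.\ a component of $\D S$. A bi-infinite geodesic can spiral onto boundary leaves of $\mu$ at both ends --- hence eventually lie inside the crowns, on the $\mu$-side of $\eta$ --- while its middle portion travels through $\Sigma\setminus S$; such a geodesic is disjoint from $\supp(\mu)$, so it belongs to $\hat\mu$, yet it crosses $\eta$ transversely twice. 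What is actually true (property (*) in the paper) is only that a geodesic in $\hat\mu$ meets $\D N(\Sigma^U)$ at most \emph{twice}, not zero times. This is precisely why the lemma needs a cover at all: in the cyclic $\BZ/3\BZ$ cover determined by intersection mod $3$ with a nonseparating curve $\alpha$ disjoint from $\eta$, the curve $\eta$ has three disjoint lifts, a lift of a geodesic crossing $\eta$ at most twice must miss one of them, and therefore lies in $\hat\eta_0$ for some lift $\eta_0$. Your version would prove the stronger statement $\dim_\CH\BX(\Sigma)=\dim_\CH X'(\Sigma)$ with no cover, which should have been a warning sign.

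In the filling case the cover you ask for does not exist, and you correctly flag this as the weak point. If $\mu$ is the stable lamination of a pseudo-Anosov $\phi$, then for any finite cover $\Sigma'\to\Sigma$ some power of $\phi$ lifts, and the preimage of $\mu$ is the stable lamination of that lift, hence again minimal and filling; so no $3$-to-$1$ cover can disconnect all filling minimal laminations. (Note also that for filling $\mu$ one has $\hat\mu\supsetneq\supp(\mu)$: the diagonals of the complementary ideal polygons are disjoint from $\mu$.) The fix is much simpler and involves no covers: when the complement of $\mu$ consists of ideal polygons, every geodesic in $\hat\mu$ is simple (a leaf of $\mu$ or a diagonal of a polygon), so the union of $\hat\mu$ over all filling minimal $\mu$ has Hausdorff dimension $1$ by the Birman--Series theorem and is absorbed into the supremum coming from the non-filling classes.
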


Before proving Lemma \ref{lem-simplerX} we recall a few facts about the topology of geodesic laminations, see \cite{Casson-Bleiler} for details. Suppose that $\mu\subset\Sigma$ is a geodesic lamination and, abusing notation, denote by $\Sigma\setminus\mu$ both the complement of $\mu$ in $\Sigma$ as well as the completion of this set. The surface $\Sigma\setminus\mu$ is then a hyperbolic surface of finite area with totally geodesic boundary. In particular, it has finitely many components. It might well be that all the components of $\Sigma\setminus\mu$ are ideal polygons, but we will be mainly interested in laminations for which this is not the case. Suppose that that some component $U$ of $\Sigma\setminus\mu$ is not an ideal polygon and let $\Sigma^U\to\Sigma$ be the cover of $\Sigma$ corresponding to the subgroup $\pi_1(U)$ of $\pi_1(\Sigma)$. The surface $\Sigma^U$ has a uniquely determined Nielsen core $N(\Sigma^U)$ - recall that Nielsen core is just another name for convex core. The component $U$ lifts isometrically to $\Sigma^U$ and its lift contains $N(\Sigma^U)$. Every connected component $C$ of $U\setminus N(\Sigma^U)$ is a so-called {\em crown}. Since each crown $C$ retracts to the corresponding boundary component of the Nielsen core $N(\Sigma^U)$, it follows that each geodesic arc in $C$ with enpoints in $C\cap N(\Sigma^U)$ is actually completely contained in $C\cap N(\Sigma^U)$. This has for us the important consequence that
\begin{itemize}
\item[(*)] if $\gamma:\BR\to U$ is a bi-infinite geodesic, then either $\gamma(\BR)\subset\D N(\Sigma^U)$, or $\gamma(\BR)$ meets the boundary $\D N(\Sigma^U)$ of the Nielsen core at most twice.
\end{itemize}
We are now ready to prove the lemma.

\begin{proof}[Proof of Lemma \ref{lem-simplerX}]
Note that the first inequality is obvious because $$X'(\Sigma)\subset\BX(\Sigma).$$ The remaining of the proof is devoted to proving the second inequality.

Given a simple closed geodesic $\eta\in\CS(\Sigma)$, let $\CM_\eta$ be the set of minimal measured laminations $\mu$ with the property that $\eta$ appears as a boundary component of the Nielsen core $N(\Sigma^U)$ of the cover corresponding to some component $U$ of $\Sigma\setminus\mu$ which is not simply connected. Said differently, $\mu\in\CM_\eta$ if $\eta$ bounds a crown of $\mu$. Let also $\CF$ be the set of minimal measured laminations $\mu$ whose complement $\Sigma\setminus\mu$ consists of ideal polygons. We can thus write 
$$\BX=\bigcup_{\mu\in\CM\CL_{\min}(\Sigma)}\hat\mu=\bigcup_{\eta\in\CS(\Sigma)}\left(\bigcup_{\mu\in\CM_\eta}\hat\mu\right)\cup\left(\bigcup_{\mu\in\CF}\hat\mu\right)$$
where the first equality is just the definition of $\BX$ and where the second follows because 
$$\CM\CL_{\min}(\Sigma)=\left(\bigcup_{\eta\in\CS(\Sigma)}\CM_\eta\right)\cup\CF.$$
In particular, since we are dealing with a countable union, we have that the Hausdorff dimension of $\BX$ is equal to the supremum of the Hausdorff dimensions of each one of the sets $\cup_{\mu\in\CM_\eta}\hat\mu$ and $\cup_{\mu\in\CF}\hat\mu$.

Suppose that $\mu\in\CF$. Since each component of $\Sigma\setminus\mu$ is an ideal polygon, it follows that each geodesic contained in $\hat\mu$ is simple. In particular, $\cup_{\mu\in\CF}\hat\mu$ is contained in the set of all simple geodesics and hence has Hausdorff dimension $1$ by the result of Birman-Series after which this note has been named \cite{Birman-Series}. It thus follows that
$$\dim_\CH\BX=\sup_{\eta\in\CS(\Sigma)}\left(\dim_\CH\left(\bigcup_{\mu\in\CM_\eta}\hat\mu\right)\right).$$
Since the Hausdorff dimension does not change under covers and since every closed surface has finitely many covers of degree 3, we obtain the second inequality in Lemma \ref{lem-simplerX} once we prove the following claim:

\begin{fact}\label{fact-cover-31}
For every simple closed geodesic $\eta\in\CS(\Sigma)$ there is a 3-to-1 normal cover $\pi:\Sigma_0\to\Sigma$ such that whenever $\eta_0\in\CS(\Sigma_0)$ is a connected component of $\pi^{-1}(\eta)$, then $\cup_{\mu\in\CM_\eta}\hat\mu\subset\pi(\hat\eta_0)$.
\end{fact}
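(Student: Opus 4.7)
\emph{Plan.} I would construct $\pi:\Sigma_0\to\Sigma$ as a connected abelian $\BZ/3$-cover in which $\eta$ lifts as three disjoint curves, and then apply a pigeonhole argument. Concretely, pick any nonzero homomorphism $\phi:\pi_1(\Sigma)\to\BZ/3$ with $\phi(\eta)=0$; such a $\phi$ exists because in $H_1(\Sigma;\BZ/3)\cong(\BZ/3)^{2g}$ the annihilator of $[\eta]$ has dimension at least $2g-1\ge 3$. Any such $\phi$ is automatically surjective, so the associated regular cover is connected and $3$-to-$1$, and because $\phi(\eta)=0$ the preimage $\pi^{-1}(\eta)$ splits into three disjoint simple closed geodesics $\eta_0,\eta_1,\eta_2$, each mapping homeomorphically onto $\eta$ and permuted cyclically by the deck group. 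The conclusion will not depend on which component is labelled $\eta_0$.

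To prove $\bigcup_{\mu\in\CM_\eta}\hat\mu\subset\pi(\hat\eta_0)$, I would distinguish cases for $\gamma\in\hat\mu$. The easy cases are those in which $\gamma$ is disjoint from $\eta$ in $\Sigma$: this covers $\gamma$ being a leaf of $\mu$, $\gamma$ contained in the interior of $N(\Sigma^U)$, or $\gamma$ a boundary component of $N(\Sigma^U)$ distinct from $\eta$ (using that distinct boundaries of $N(\Sigma^U)$ are disjoint inside the isometric copy of $U$ in $\Sigma^U$, hence remain disjoint in $\Sigma$). In all these subcases any of the three lifts of $\gamma$ avoids $\pi^{-1}(\eta)\supset\eta_0$. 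If $\gamma=\eta$, one simply lifts to $\eta_1$. The nontrivial case is when $\gamma\subset U$ is not contained in $\partial N(\Sigma^U)$, for which property (*) gives $|\gamma\cap\eta|=k\in\{1,2\}$.

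In this nontrivial case I would first observe that $\gamma$ cannot be closed: any closed geodesic contained in $U$ in fact lies inside the convex core $N(\Sigma^U)$, and a closed geodesic in a hyperbolic surface with totally geodesic boundary is either one of the boundary components or is contained in the interior, hence cannot cross $\partial N(\Sigma^U)\supset\eta$ transversally. Thus $\gamma$ is non-closed and $\pi^{-1}(\gamma)$ consists of three distinct bi-infinite lifts. Since $\eta_0\to\eta$ is a homeomorphism, each of the $k$ points of $\gamma\cap\eta$ has exactly one preimage on $\eta_0$, and these $k\le 2$ preimages are distributed among the three lifts. Pigeonhole then produces a lift disjoint from $\eta_0$, witnessing $\gamma\in\pi(\hat\eta_0)$. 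The main obstacle I anticipate is precisely this exclusion of closed geodesics in the nontrivial case: for a hypothetical closed $\gamma$ with $\phi(\gamma)\neq 0$, the preimage $\pi^{-1}(\gamma)$ would be a single connected curve meeting $\eta_0$ at every one of the $k$ preimage points, and pigeonhole would collapse. The convex-core observation above is what prevents that bad scenario.
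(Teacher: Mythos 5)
Your proof is correct and essentially the same as the paper's: the paper also takes the degree-$3$ normal cover associated to a homomorphism $\pi_1(\Sigma)\to\BZ/3\BZ$ vanishing on $[\eta]$ (there, algebraic intersection number with an auxiliary non-separating curve $\alpha$ disjoint from $\eta$), invokes property (*) to get at most two transverse meetings of $\gamma$ with $\eta$, and pigeonholes among the three lifts. Your explicit exclusion of closed geodesics crossing $\eta$ transversally --- needed so that $\gamma$ genuinely has three distinct lifts and the pigeonhole does not collapse --- is a point the paper leaves implicit in its justification of (*), and is worth spelling out as you do.
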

\begin{proof}
To find an appropriate cover consider a non-separating simple closed curve $\alpha\subset\Sigma$ disjoint from $\eta$ and consider the degree $3$ cover 
\begin{equation}\label{eq-cover}
\pi:\Sigma_0\to\Sigma
\end{equation}
corresponding to the kernel of the composition of the homomorphisms 
$$\pi_1(\Sigma)\to \BZ/3\BZ,\ \ [\gamma]\mapsto\langle\gamma,\alpha\rangle\mod 3$$
where $\langle\cdot,\cdot\rangle$ is the algebraic intersection number. Note that the cover \eqref{eq-cover} is normal and has the property that $\pi^{-1}(\eta)$ consists of 3 homeomorphic lifts $\eta_0,\eta_1,\eta_2$. 

Given $\mu\in\CM_\eta$ we need to show that $\hat\mu\subset \pi(\hat\eta_0)$. Since the cover is normal, it actually suffices to prove that every geodesic $\gamma:\BR\to\Sigma$ with $\gamma(\BR)$ disjoint of $\mu\in\CM_\eta$ has a lift $\tilde\gamma:\BR\to\Sigma_0$ which is disjoint from one of the three curves $\eta_0,\eta_1,\eta_2$. However, since by definition of $\CM_\eta$ we have that $\eta$ bounds a crown in $\Sigma\setminus\mu$, we get from (*) that every geodesic $\gamma:\BR\to\Sigma\setminus\mu$ intersects $\eta$ at most twice. In particular, the lifts of $\gamma$ to $\Sigma_0$ intersect at most two of the three preimages of $\eta$, as we needed to show.
\end{proof}

Having proved Fact \ref{fact-cover-31} we have also proved Lemma \ref{lem-simplerX}.
\end{proof}

In light of Lemma \ref{lem-simplerX}, to estimate $\dim_\CH\BX(\Sigma)$ we only need to be able to estimate the dimension of the sets $\hat\eta$ where $\eta\subset\Sigma$ is a simple closed curve. For any such $\eta\in\CS(\Sigma)$ let $\Sigma_\eta$ be the unique complete hyperbolic surface with Nielsen core 
$$N(\Sigma_\eta)=\Sigma\setminus\eta.$$
As earlier, we are abusing notation and denoting by $\Sigma\setminus\eta$ not only the complement of $\eta$ in $\Sigma$ but also its completion. For every component $\Sigma_\eta^i$ of $\Sigma_\eta$ fix a Fuchsian group $\Gamma_\eta^i$ with $\Sigma_\eta^i$ isometric to $\BH^2/\Gamma_\eta^i$. Finally, let $\Lambda(\Gamma_\eta^i)\subset\BS^1=\D_\infty\BH^2$ be the limit set of $\Gamma_\eta^i$ and, to avoid distinguishing as far as possible if the curve $\eta$ is separating or not, we set $\dim_\CH\Lambda(\Gamma_\eta)$ to be the maximum of $\dim_\CH\Lambda(\Gamma_\eta^i)$ over all components.

\begin{lem}\label{lem-dimdimlim}
With the same notation as above we have
$$\dim_\CH\hat\eta=1+2\dim_\CH\Lambda(\Gamma_\eta)$$
for every simple curve $\eta\in\CS(\Sigma)$.
\end{lem}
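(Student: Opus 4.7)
The plan is to compute $\dim_\CH\hat\eta$ by lifting to the universal cover and using the product structure of $PT\BH^2$ in endpoint coordinates. I would first reduce to a single component $\Sigma_\eta^i$ of $\Sigma_\eta$, since $\hat\eta$ is the finite union of the contributions of the components and the Hausdorff dimension of a finite union is the maximum.

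Next, I would exploit the standard parametrization
$$PT\BH^2 \;\cong\; \bigl((\BS^1\times\BS^1\setminus\Delta)/\text{flip}\bigr)\times\BR,$$
where a point $v\in PT\BH^2$ is sent to the pair of endpoints of the geodesic through $v$ together with an arc-length parameter. This map is a diffeomorphism and is bi-Lipschitz on any relatively compact set. The key observation, which I would verify from the fact that $\overline{\Sigma\setminus\eta}$ is the Nielsen core of $\Sigma_\eta^i$, is that a geodesic in $\BH^2$ is a lift of a geodesic belonging to $\hat\eta$ (for the component $i$) precisely when both of its endpoints lie in the limit set $\Lambda(\Gamma_\eta^i)$. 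Thus the lift of the $i$-th piece of $\hat\eta$ to $PT\BH^2$ is, under the above parametrization, exactly
$$\bigl((\Lambda(\Gamma_\eta^i)\times\Lambda(\Gamma_\eta^i)\setminus\Delta)/\text{flip}\bigr)\times\BR.$$
Passing back to $PT\Sigma_\eta^i$, and then to $PT\Sigma$ via the isometric embedding $\overline{\Sigma\setminus\eta}\hookrightarrow\Sigma$, only involves quotienting by a properly discontinuous action and composing with local isometries, neither of which changes Hausdorff dimension.

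The computation then reduces to showing that the product of the limit set with itself, crossed with $\BR$, has Hausdorff dimension $1+2\dim_\CH\Lambda(\Gamma_\eta^i)$. Since $\Sigma_\eta^i$ has compact Nielsen core with totally geodesic boundary, the Fuchsian group $\Gamma_\eta^i$ is convex cocompact. By the work of Patterson and Sullivan, its limit set is then Ahlfors regular of dimension $\delta_i = \dim_\CH\Lambda(\Gamma_\eta^i)$. For Ahlfors regular sets $A,B$ one has $\dim_\CH(A\times B)=\dim_\CH A+\dim_\CH B$ (this follows from standard mass-distribution and covering arguments, since Ahlfors regularity pins both upper and lower box dimensions to the Hausdorff dimension). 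Taking the product with the $\BR$-factor adds $1$, and Ahlfors regularity is again strong enough to ensure that this product has Hausdorff dimension $1+2\delta_i$ rather than merely a bound in one direction.

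Taking the maximum over the components $i$ yields $\dim_\CH\hat\eta=1+2\dim_\CH\Lambda(\Gamma_\eta)$. I expect the main technical point to be the careful verification of the Ahlfors regularity of the limit set and the product dimension identity: without some regularity, only the inequality $\dim_\CH(A\times B)\ge \dim_\CH A+\dim_\CH B$ is automatic, and one needs the convex cocompact structure to upgrade this to an equality and to secure the matching upper bound.
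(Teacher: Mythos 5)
Your proof is correct and follows essentially the same route as the paper: lift $\hat\eta$ to $PT\BH^2$, identify it via endpoint coordinates with $\bigl((\Lambda\times\Lambda\setminus\Delta)/\text{flip}\bigr)\times\BR$, and compute the dimension of the product. The only difference is cosmetic: where the paper asserts the product formula $\dim_\CH(\BR\times A\times B)=1+\dim_\CH A+\dim_\CH B$ ``because the limit set is self-similar,'' you justify it explicitly via Ahlfors regularity of the limit set of the convex cocompact group $\Gamma_\eta^i$, which is a perfectly adequate (and arguably cleaner) way to close that step.
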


\begin{proof}
We prove the lemma for $\eta$ non-separating. The argument in the separating case is almost identical and we leave it to the interested reader. 
   \begin{figure}[ht]
  \setlength{\unitlength}{0.01\linewidth}
  \begin{picture}(90, 60)
  \put(0,0){
  \includegraphics{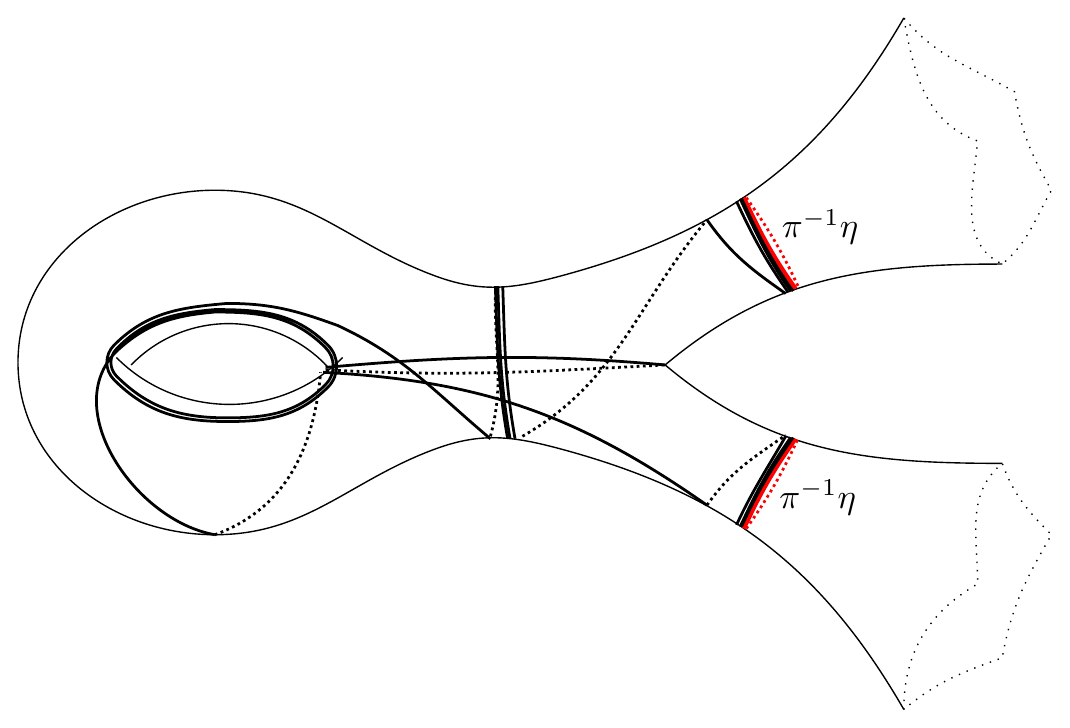}
  }
  \end{picture}
  \caption{The surface $\Sigma_\eta$ for a non-separating simple closed curve $\eta$ - compare with figure \ref{Fig:hat}. The Nielsen core is bounded by the two lifts of $\eta$.}
  \label{Fig:Sandwich}
  \end{figure}

Starting with the proof, recall that $\Sigma_\eta=\BH^2/\Gamma_\eta$ and denote by $\CF(\Sigma_\eta)\subset PT\BH^2$ be the set of all directions tangent to geodesics which spend all their life, past and future, inside the preimage under $\BH^2\to\Sigma_\eta$ of the Nielsen core $N(\Sigma_\eta)$ of $\Sigma_\eta$. Equivalently, $\CF(\Sigma_\eta)$ is the set of lines tangent to geodesics whose two accumulation points in $\D\BH^2$ belong to the limit set $\Lambda(\Gamma_\eta)$. By definition, $\hat\eta\subset PT\Sigma$ is the image of $\CF(\Sigma_\eta)$ under the covering 
$$PT\BH^2\to PT\Sigma_\eta\to PT\Sigma.$$
Noting that coverings preserve Hausdorff dimension, we get that
$$\dim_\CH\hat\eta=\dim_{\CH}\CF(\Sigma_\eta)=1+2\dim_\CH\Lambda(\Gamma_\eta)$$
where the last equality is surely well-known. We explain anyways why it is true. Given a geodesic $\tau\subset\BH^2$ whose end points $\tau(\pm\infty)\in\BS^1=\D_\infty\BH^2$ are disjoint from $\Lambda_\eta$, consider the two intervals $I,J$ into which $\tau(\pm\infty)$ cuts $\BS^1$. Suppose also that $A=I\cap\Lambda(\Gamma_\eta)$ and $B=I\cap\Lambda(\Gamma_\eta)$ are not empty. Then we have a map
$$\BR\times A\times B\to\CF(\Sigma_\eta)$$
which maps $(t,a,b)$ to $\gamma_{a,b}(t)$ where $\gamma_{a,b}$ is the geodesic parametrized by arc length with
$$\gamma_{a,b}(0)\in\tau\ \ \lim_{t\to-\infty}\gamma_{a,b}(t)=a\ \ \lim_{t\to\infty}\gamma_{a,b}(t)=b.$$
Since this map is the restriction of a smooth embedding $\BR\times I\times J\to PT\BH^2$ we deduce that the Hausdorff dimension of its image is equal to
$$1+\dim_\CH A+\dim_\CH B=1+2\dim_\CH\Lambda(\Gamma_\eta)$$
because the limit set is self-similar. Since $\CF(\Sigma_\eta)$ can be covered by countably many sets of this form, the claim follows.
\end{proof}

In the next section we will prove the following estimate of the maximum of the Hausdorff dimensions of limit sets of the Fuchsian groups $\Gamma_\eta$ where $\eta$ ranges over all simple closed geodesics:

\begin{prop}\label{prop-blablabla}
Let $\Sigma$ be a closed hyperbolic surface with systole $\syst(\Sigma)$, and denote by $\CS(\Sigma)$ the set of all simple closed geodesics in $\Sigma$. Then we have
$$\max_{\eta\in\CS(\Sigma)}\dim_\CH\Lambda(\Gamma_\eta)\le\frac 12+\frac 12\sqrt{1-\frac{4\cdot \syst(\Sigma)^2}{\vol(\Sigma)^2+4\cdot\syst(\Sigma)^2}}.$$
If moreover $10\cdot\syst(\Sigma)\le\vol(\Sigma)$ then we also have
$$\frac 12+\frac 12\sqrt{1-10\frac{\syst(\Sigma)}{\vol(\Sigma)}}\le\max_{\eta\in\CS(\Sigma)}\dim_\CH\Lambda(\Gamma_\eta).$$
\end{prop}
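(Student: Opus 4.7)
The proof rests on the Patterson-Sullivan dictionary, which identifies the Hausdorff dimension $\delta(\eta):=\dim_\CH\Lambda(\Gamma_\eta)$ with the bottom $\lambda_0(\Sigma_\eta)$ of the spectrum of the hyperbolic Laplacian on $\Sigma_\eta$: one has $\lambda_0 = \delta(1-\delta)$ whenever $\delta\geq 1/2$, and $\lambda_0 = 1/4$ otherwise. Equivalently $\delta\leq\tfrac{1}{2} + \tfrac{1}{2}\sqrt{\max\{0,1-4\lambda_0\}}$ always, with equality when $\delta\geq 1/2$. Both inequalities of the proposition therefore reduce to estimating $\lambda_0(\Sigma_\eta)$ in terms of $\syst(\Sigma)$ and $\vol(\Sigma)$.

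For the upper bound on $\delta$, I would combine the above with Cheeger's inequality $\lambda_0(\Sigma_\eta)\geq h(\Sigma_\eta)^2/4$ and then prove that $h(\Sigma_\eta)\geq 2\,\syst(\Sigma)/\sqrt{\vol(\Sigma)^2+4\,\syst(\Sigma)^2}$ for every $\eta\in\CS(\Sigma)$. The surface $\Sigma_\eta$ is the union of its convex core $N(\Sigma_\eta)=\Sigma\setminus\eta$, of area $\vol(\Sigma)$ and total boundary length at most $2\ell(\eta)$, with one or two hyperbolic funnels glued along $\D N(\Sigma_\eta)$. For the natural family of model sets $A_r=\{x\in\Sigma_\eta:d(x,N(\Sigma_\eta))\leq r\}$ a direct computation in funnel coordinates yields $\area(A_r)=\vol(\Sigma)+2\ell(\eta)\sinh(r)$ and $\length(\D A_r)=2\ell(\eta)\cosh(r)$, so minimizing in $r$ one finds an optimum ratio of $2\ell(\eta)/\sqrt{\vol(\Sigma)^2+4\ell(\eta)^2}$ attained at $\sinh(r)=2\ell(\eta)/\vol(\Sigma)$; this quantity is monotone increasing in $\ell(\eta)\geq\syst(\Sigma)$, and so exceeds the target bound. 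The real content is then a rearrangement argument comparing an arbitrary compact $A\subset\Sigma_\eta$ to a model set of the same area, using the hyperbolic plane isoperimetric inequality $\length(\D A)\geq\area(A)$ for the portion of $A$ lifting to $\BH^2$ and the product structure of the funnels for the rest.

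For the lower bound on $\delta$, I would take $\eta$ to be a systole of $\Sigma$ and plug a Lipschitz test function into the variational characterization of $\lambda_0$. Let $f$ equal $1$ on $N(\Sigma_\eta)$ and decay linearly to $0$ over a collar of width $w$ inside the funnels; integrating in the funnel metric then gives $\int|\nabla f|^2=2\ell(\eta)\sinh(w)/w^2$ and $\int f^2\geq\vol(\Sigma)$. Optimizing $w$ near the minimum of $\sinh(w)/w^2$ yields $\lambda_0(\Sigma_\eta)\leq\tfrac{5}{2}\cdot\syst(\Sigma)/\vol(\Sigma)$ whenever $10\,\syst(\Sigma)\leq\vol(\Sigma)$, and Patterson-Sullivan then translates this into $\delta(\eta)\geq\tfrac{1}{2}+\tfrac{1}{2}\sqrt{1-10\,\syst(\Sigma)/\vol(\Sigma)}$. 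Since the maximum over $\CS(\Sigma)$ is at least as large, this is the claimed lower bound.

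The main obstacle is the rearrangement step in the upper bound. The hyperbolic plane isoperimetric inequality controls any portion of $A$ lifting to $\BH^2$, giving a ratio of at least $1$ (much stronger than the target), and the funnels in isolation have explicit Cheeger constant $1$; what is harder is to handle compact sets genuinely straddling the core and the funnels, and to show that their optimizer is indeed asymptotically a model set $A_r$. Producing exactly the denominator $\sqrt{\vol(\Sigma)^2+4\,\syst(\Sigma)^2}$ pins down the isoperimetric profile of $\Sigma_\eta$ precisely at the scale where the compact core of fixed area $\vol(\Sigma)$ meets the funnels of boundary length bounded below by $\syst(\Sigma)$.
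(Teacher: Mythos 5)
Your overall architecture coincides with the paper's: Patterson--Sullivan to convert $\dim_\CH\Lambda(\Gamma_\eta)$ into $\lambda_0(\Sigma_\eta)$, Cheeger's inequality plus a lower bound $h(\Sigma_\eta)\ge 2\syst(\Sigma)/\sqrt{\vol(\Sigma)^2+4\syst(\Sigma)^2}$ for the upper bound, and a linear cut-off test function supported on a collar of the convex core for the lower bound. The lower bound is complete and is essentially the paper's Lemma \ref{H-lowerbound} (the paper simply takes collar width $1$, giving $2\sinh(1)<5/2$, rather than optimizing in $w$). The model-set computation $\area(A_r)=V+L\sinh r$, $\length(\D A_r)=L\cosh r$, with optimum $L/\sqrt{V^2+L^2}$ at $\sinh r=L/V$, is exactly the paper's Fact \ref{fact-calculus}.

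The genuine gap is the step you yourself flag as ``the main obstacle'': reducing the Cheeger infimum over \emph{all} compact subsurfaces $A\subset\Sigma_\eta$ to the one-parameter family of model sets $A_r$. A rearrangement ``comparing an arbitrary compact $A$ to a model set of the same area'' cannot work as stated, because area alone does not see the topology of $A$: the infimum could a priori be approached by subsurfaces carried by a \emph{proper} $\pi_1$-injective subsurface of the core (e.g.\ a neighborhood of the convex hull of a smaller subsurface), whose boundary length is governed by geodesics other than $\eta$, and for such $A$ there is no model set $A_r$ of comparable area and topology to compare against. The isoperimetric inequality $\length(\D A)\ge\area(A)$ only disposes of simply connected pieces, and the funnels' product structure only of pieces entirely outside the core; the sets straddling both regions with nontrivial topology are precisely the ones left untreated. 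The paper closes this gap by two inputs you do not supply: first, the regularity result of Buser, worked out for geometrically finite surfaces by Benson, that the Cheeger constant is computed by convex subsurfaces with constant geodesic curvature boundary (and, once $h<1$, of negative Euler characteristic); second, a covering argument: for each such candidate $A$ one passes to the cover $\Sigma_\eta^A\to\Sigma_\eta$ determined by $\pi_1(A)$, where $A$ lifts and \emph{becomes} homotopy equivalent to the cover, so that after stabilizing a tower of such covers the only candidates left are the model neighborhoods $B(r)$ of the convex core $N(\Sigma^A)$. The price is that the final comparison must be uniform over all geometric covers $\Sigma^A\to\Sigma$, using $\vol(N(\Sigma^A))\le\vol(\Sigma)$ and $\ell(\D N(\Sigma^A))\ge 2\syst(\Sigma)$ together with the monotonicity of $L\mapsto L/\sqrt{V^2+L^2}$ --- which is exactly why the stated bound involves $\syst(\Sigma)$ rather than $\ell(\eta)$. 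Without this (or some substitute for the reduction to model sets), your upper bound is not proved.
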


Before moving on recall that the systole can be bounded from above in terms of the genus $g$ as follows: since $\vol(\Sigma)=2\pi(2g-2)$ by Gau\ss -Bonnet, and since the volume of a hyperbolic ball of radius $r$ is given by $2\pi(\cosh r-1)$, it follows that if $r$ is the volume of the largest embedded ball then $r\le\arccosh(2g-1)$ which means that 
\begin{equation}\label{eq-silly-bound}
\syst(\Sigma)\le 2r\le 2\cdot\arccosh(2g-1).
\end{equation}
Now, since the function 
$$x\mapsto\frac{20\arccosh(2x-1)}{4\pi(x-1)}$$
is monotonically decreasing when $x>1$, and since it takes the value $0.983...$ for $x=6$, we deduce that 
\begin{equation}\label{eq-silly condition}
10\cdot\syst(\Sigma)\le\vol(\Sigma)
\end{equation}
as long as $\Sigma$ has genus $6$: 

\begin{lem}\label{kor-syst-bound}
If $\Sigma$ has genus $g\ge 6$ then it satisfies \eqref{eq-silly condition}.\qed
\end{lem}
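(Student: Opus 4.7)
The plan is to assemble the three ingredients the paragraph preceding the lemma has already laid out and verify that they really do combine in the desired way. First I would rewrite the inequality $10\cdot\syst(\Sigma)\le\vol(\Sigma)$ in the equivalent form
$$F(g):=\frac{10\cdot\syst(\Sigma)}{\vol(\Sigma)}\le 1,$$
and use the two expressions at hand: Gauss--Bonnet gives $\vol(\Sigma)=4\pi(g-1)$, and \eqref{eq-silly-bound} bounds $\syst(\Sigma)\le 2\arccosh(2g-1)$. Substituting yields
$$F(g)\le\frac{20\arccosh(2g-1)}{4\pi(g-1)}=:\varphi(g).$$

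Next I would verify the monotonicity claim used in the text, namely that $\varphi(x)$ is decreasing on $(1,\infty)$. One clean way is to compute $\varphi'(x)$: the numerator of $\varphi'(x)$ (after clearing the positive denominator $4\pi(x-1)^2$) is proportional to
$$\frac{2(x-1)}{\sqrt{(2x-1)^2-1}}-\arccosh(2x-1).$$
Writing $u=\arccosh(2x-1)$, so that $2x-1=\cosh u$ and $\sqrt{(2x-1)^2-1}=\sinh u$, this becomes $\frac{\cosh u-1}{\sinh u}-u=\tanh(u/2)-u$, which is negative for $u>0$. Hence $\varphi$ is strictly decreasing on $(1,\infty)$.

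Finally I would evaluate $\varphi$ at $x=6$: since $\arccosh(11)=\log(11+\sqrt{120})$, an elementary numerical estimate gives $\varphi(6)=\frac{20\log(11+\sqrt{120})}{20\pi}=\frac{\log(11+\sqrt{120})}{\pi}\approx 0.983<1$. Combining with monotonicity of $\varphi$, for every $g\ge 6$ we have $F(g)\le\varphi(g)\le\varphi(6)<1$, which is exactly \eqref{eq-silly condition}. I do not expect a real obstacle here: the lemma is a concrete numerical check, and the only step that requires a line of calculation is the monotonicity, which reduces to the standard inequality $\tanh(u/2)<u$ for $u>0$.
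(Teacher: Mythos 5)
Your proof is correct and follows essentially the same route as the paper, which establishes the lemma in the paragraph preceding its statement via Gauss--Bonnet, the bound $\syst(\Sigma)\le 2\arccosh(2g-1)$, and the monotonicity and value at $x=6$ of the function $x\mapsto\frac{20\arccosh(2x-1)}{4\pi(x-1)}$. The only addition is that you actually verify the monotonicity (reducing it to $\tanh(u/2)<u$), which the paper merely asserts; this check is correct.
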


A similar argument shows that 
\begin{equation}\label{eq-silly bound}
\frac{10}3\cdot\syst(\Sigma)\le\vol(\Sigma)
\end{equation}
for every closed hyperbolic surface $\Sigma$.

Proposition \ref{prop-blablabla} will be proved in the next  section. At this point we use it to conclude the proof of Theorem \ref{prop-H-bound}:

\begin{named}{Theorem \ref{prop-H-bound}}
There are constants $C>0$ and $c>0$ with 
$$3-c\cdot\frac{\syst(\Sigma)}{\vol(\Sigma)}\le  \dim_{\CH}\BX(\Sigma)\le 3-C\cdot\left(\frac{\syst(\Sigma)}{\vol(\Sigma)}\right)^2$$
for every closed hyperbolic surface $\Sigma$. Here $\syst(\Sigma)$ is the systole of $\Sigma$, i.e. the length of the shortest closed geodesic.
\end{named}
\begin{proof}
First let us consider the surfaces $\Sigma$ that satisfy \eqref{eq-silly condition}. Note  that whenever we have a finite cover $\Sigma'\to\Sigma$ then
\begin{equation}\label{eq-I am sick of this}
\frac{\syst(\Sigma')}{\vol(\Sigma')}\le\frac{\syst(\Sigma)}{\vol(\Sigma)}.
\end{equation}
Hence if $\Sigma$ satisfies \eqref{eq-silly condition} then the surface $\Sigma'$ provided by Lemma \ref{eq-silly condition} also satisfies \eqref{eq-silly condition}. Recall that, by the said lemma, $\Sigma'$ has the property that 
$$\dim_\CH X'(\Sigma)\le\dim_\CH\BX(\Sigma)\le\dim_\CH X'(\Sigma').$$
Now, from Lemma \ref{lem-dimdimlim} and the lower bound in Proposition \ref{prop-blablabla} we get that
\begin{align*}
\dim_\CH X'(\Sigma)
    &=\sup_{\eta\in\CS(\Sigma)}\dim_\CH\hat\eta=1+2\sup_{\eta\in\CS(\Sigma)}\dim_\CH\Lambda(\Gamma_\eta)\\
    &\ge 2+\sqrt{1-10\frac{\syst(\Sigma)}{\vol(\Sigma)}}.
\end{align*}    
Similarly, using the upper bound in Proposition \ref{prop-blablabla} we get
$$\dim_\CH X'(\Sigma')\le 2+\sqrt{1-\frac{4\cdot\syst(\Sigma')^2}{\vol(\Sigma')^2+4\cdot\syst(\Sigma')^2}},$$
which in light of \eqref{eq-silly bound} yields also
$$\dim_\CH X'(\Sigma')\le 2+\sqrt{1-\frac{\syst(\Sigma')^2}{12\cdot\vol(\Sigma')^2}}.$$
Now, taking into account \eqref{eq-I am sick of this} we get
$$\dim_\CH X'(\Sigma')\le 2+\sqrt{1-\frac{\syst(\Sigma)^2}{12\cdot\vol(\Sigma)^2}}.$$
Altogether we obtain
$$  2+\sqrt{1-10\frac{\syst(\Sigma)}{\vol(\Sigma)}}\le\dim_{\CH}\BX(\Sigma)\le 2+\sqrt{1-\frac{\syst(\Sigma)^2}{12\cdot\vol(\Sigma)^2}}$$
from where we obtain the desired bound when considering the Taylor expansion of the square root function at $1$. We have proved Theorem \ref{prop-H-bound} for all surfaces $\Sigma$ satisfying \eqref{eq-silly condition}.

Consider now the set $Z$ of all hyperbolic surfaces $\Sigma$ which do not satisfy \eqref{eq-silly condition}. From Lemma \ref{kor-syst-bound} we get that $Z$ is a subset of the union of four moduli spaces $\CM_g$ with $g=2,3,4,5$. In fact, the (closure of the) part of $Z$ contained in each one of those moduli spaces is compact because it is contained in the compact set consisting of surfaces where the injectivity radius is at least $\frac {\vol(\Sigma)}{20}=\frac{\pi(g-1)}5$. Now, compactness of $Z$ implies that the continuous function $\Sigma\mapsto\dim_\CH\BX(\CH)$ achieves a maximum, say at $\Sigma_0$. Given that 
$$\max_{\eta\in\CS(\Sigma)}\dim_\CH\Lambda(\Gamma_\eta)\le\frac 12+\frac 12\sqrt{1-\frac{4\cdot\syst(\Sigma)^2}{\vol(\Sigma)^2+4\cdot\syst(\Sigma)^2}}<1$$
by Proposition \ref{prop-blablabla}, we deduce from Lemma \ref{lem-dimdimlim} that $\dim_\CH\BX(\Sigma_0)<3$. Now, since the function $\frac{\syst(\cdot)}{\vol(\cdot)}$ is also continuous and hence bounded on the compact set $Z$ we get that there are $C,c>0$ with 
$$3-C\cdot\left(\frac{\syst(\Sigma)}{\vol(\Sigma)}\right)^2\ge \dim_{\CH}\BX(\Sigma)\ge 3-c\cdot\frac{\syst(\Sigma)}{\vol(\Sigma)}$$
for every $\Sigma\in Z$. In other words, we have also proved Theorem \ref{prop-H-bound} for all surfaces $\Sigma$ which do not satisfy \eqref{eq-silly condition}. 
\end{proof}

\section{Hausdorff dimension of limit sets}\label{sec-cheeger}

Our next goal is to prove Proposition \ref{prop-blablabla}. In order to estimate the Haudorff dimension $\dim_\CH\Lambda(\Gamma_\eta)$ of the limit set of the Fuchsian group $\Gamma_\eta$ we will rely on relation between this quantity, the bottom of the spectrum of the Laplacian $\lambda_0(\Sigma_\eta)$ of $\Sigma_\eta=\BH^2/\Gamma_\eta$, and the Cheeger constant. To begin with, recall that by the work of Patterson \cite{Patterson} and Sullivan \cite{Sullivan} we have 
$$\lambda_0(\Sigma_\eta)=\dim_\CH\Lambda(\Gamma_\eta)(1-\dim_\CH\Lambda(\Gamma_\eta))$$
whenever $\dim_\CH\Lambda(\Gamma_\eta)\ge\frac 12$ and $\lambda_0(\Sigma_\eta)=\frac 14$ otherwise. Since we want to compute $\dim_\CH\Lambda(\Gamma_\eta)$ we can rewrite as
$$\dim_\CH\Lambda(\Gamma_\eta)=\frac 12+\sqrt{\frac 14-\lambda_0(\Sigma_\eta)}$$
if $\lambda_0(\Sigma_\eta)<\frac 14$ and $\dim_\CH\Lambda(\Gamma_\eta)\le\frac 12$ otherwise. 

It thus follows that, in order to bound $\dim_\CH\Lambda(\Gamma_\eta)$ from below we have to bound $\lambda_0(\Sigma_\eta)$ from above. To do so, recall that almost by definition 
$$\lambda_0(\Sigma_\eta)=\inf\{\CR(f)\vert f\in C_c(\Sigma_\eta)\}$$ 
is the infimum of the Rayleigh quotients $R(f)=\frac{\int\Vert\DD f\Vert^2}{\int f^2}$ over all functions with compact support.

\begin{lem}\label{H-lowerbound}
Let $\Sigma$ be a closed hyperbolic surface of genus $g$ and suppose that $\eta\subset\Sigma$ is a simple closed geodesic with 
$$10\cdot \ell_\Sigma(\eta)\le\vol(\Sigma).$$ 
Then we have
$$\dim_{\CH}\Lambda(\Gamma_\eta)\ge\frac 12+\frac 12\sqrt{1-10\frac{\ell_\Sigma(\eta)}{\vol(\Sigma)}}.$$
\end{lem}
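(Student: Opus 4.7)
The plan is to combine the Patterson--Sullivan equality recalled at the start of this section with the variational characterisation $\lambda_0 = \inf \CR(f)$ to reduce the lemma to an explicit test-function estimate. By Patterson--Sullivan, $\dim_\CH \Lambda(\Gamma_\eta^i) = \frac12 + \sqrt{\frac14 - \lambda_0(\Sigma_\eta^i)}$ whenever the right hand side is real, and $\dim_\CH \Lambda(\Gamma_\eta)$ is the maximum over components; hence the lemma reduces to exhibiting a single component $\Sigma_\eta^i$ with
\[
\lambda_0(\Sigma_\eta^i) \;\le\; \frac{5\ell_\Sigma(\eta)}{2\vol(\Sigma)}.
\]
The hypothesis $10\ell_\Sigma(\eta)\le\vol(\Sigma)$ is exactly what ensures this target is at most $\frac14$, so the Patterson--Sullivan formula will apply.

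Next I would pick the favourable component of $\Sigma_\eta$. When $\eta$ separates, I take the component whose Nielsen core has area $V \ge \vol(\Sigma)/2$ and boundary length $L = \ell_\Sigma(\eta)$; when $\eta$ is non-separating, the unique component has $V = \vol(\Sigma)$ and total funnel boundary $L = 2\ell_\Sigma(\eta)$. In both cases $L/V \le 2\ell_\Sigma(\eta)/\vol(\Sigma)$. On each funnel of the chosen component I would use the standard parametrisation with metric $dr^2 + \ell_0^2\cosh^2(r)\,d\theta^2$, where $r\ge 0$ is the distance to the bounding geodesic of length $\ell_0$ and $\theta \in [0,1]$.

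For the test function I take $\phi\equiv 1$ on the Nielsen core and $\phi(r,\theta)=e^{-sr}$ in each funnel, with a parameter $s > 1/2$ to be chosen. Using $\int_0^\infty e^{-2sr}\cosh(r)\,dr = \frac{2s}{4s^2-1}$, a direct Fubini computation in funnel coordinates gives
\[
\CR(\phi) \;=\; \frac{s^2 L \cdot \frac{2s}{4s^2-1}}{V + L \cdot \frac{2s}{4s^2-1}} \;\le\; \frac{2s^3 L}{V(4s^2-1)} \;\le\; \frac{4s^3}{4s^2-1}\cdot\frac{\ell_\Sigma(\eta)}{\vol(\Sigma)}.
\]
The function $s\mapsto \frac{4s^3}{4s^2-1}$ attains its minimum $\frac{3\sqrt{3}}{4}$ at $s=\sqrt{3}/2$, which is well below $\frac{5}{2}$, so the choice $s=\sqrt{3}/2$ delivers the required upper bound on $\lambda_0(\Sigma_\eta^i)$ and finishes the proof via Patterson--Sullivan.

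The main thing to be careful about is the bookkeeping in the separating case: one must pick the Nielsen-core component of \emph{larger} area, since the opposite choice would worsen the ratio $L/V$. The optimisation in $s$ itself has a lot of room to spare, and the need to approximate $\phi$ by compactly supported functions in the definition of $\lambda_0$ is routine because $\phi$ and its gradient already decay exponentially in each funnel.
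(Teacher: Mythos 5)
Your proof is correct and follows essentially the same route as the paper: reduce via Patterson--Sullivan to an upper bound $\lambda_0(\Sigma_\eta)\le\frac52\cdot\frac{\ell_\Sigma(\eta)}{\vol(\Sigma)}$, obtained from an explicit test function equal to $1$ on the Nielsen core and decaying in the funnels. The only difference is the choice of test function --- your exponential profile $e^{-sr}$ with $s=\sqrt3/2$ yields the constant $\frac{3\sqrt3}{4}\approx 1.30$ where the paper's linear cutoff $\max\{0,1-r\}$ yields $2\sinh(1)\approx 2.35$; both are below the needed $\frac52$, and your explicit treatment of the separating case (taking the larger-area component) is a point the paper glosses over.
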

\begin{proof}
Consider the function
$$f:\Sigma_\eta\to[0,1],\ \ f(x)=\max\{0,1-d(x,N(\Sigma_\eta))\}.$$
The gradient $\DD f$ of this function vanishes outside of the collar 
$$C=\{x\in\Sigma_g\vert 0< d(x,N(\Sigma_\eta))\le 1\},$$
and has norm $1$ there in. Also, the function $f\equiv 1$ on the Nielsen core $N(\Sigma_\eta)$. Hence we can easily estimate the Rayleigh quotient $\CR(f)$ of $f$ as follows:
\begin{align*}
\CR(f)&=\frac{\int\Vert\DD f\Vert^2}{\int f^2}\le\frac{\vol(C)}{\vol(N(\Sigma_\eta))}\\
&=\frac{\sinh(1)\cdot\ell(\D N(\Sigma_\eta))}{\vol(N(\Sigma_\eta))}=\frac{2\sinh(1)\cdot\ell(\eta)}{\vol(\Sigma)}\\
&<\frac 52\cdot \frac{\ell(\eta)}{\vol(\Sigma)}.
\end{align*}
In particular, we get the following upper bound for the bottom of the spectrum of $\Sigma_\eta$:
$$\lambda_0(\Sigma_\eta)\le\CR(f)<\frac 52\cdot\frac{\ell(\eta)}{\vol(\Sigma)}$$
The assumption on the length of the curve implies thus that $\lambda_0(\Sigma_\eta)<\frac 14$, which means that the Hausdorff dimension $\dim_\CH\Lambda(\Gamma_\eta)$ of the limit set of the Fuchian group associated to $\Sigma_\eta$ is given by
$$\dim_\CH\Lambda(\Gamma_\eta)=\frac 12+\sqrt{\frac 14-\lambda_0(\Sigma_\eta)}\ge \frac 12+\sqrt{\frac 14-\frac 52\cdot\frac{\ell(\eta)}{\vol(\Sigma)}},$$
as claimed. 
\end{proof}

We come now to the upper bound in Proposition \ref{prop-blablabla}. In other words, we need to bound from above the Hausdorff dimenson of the limit set $\Lambda(\Gamma_\eta)$ where $\eta$ ranges over the collection $\CS$ of all simple closed geodesics. Since we are going to again exploit the relation between Hausdorff dimension and bottom of the Laplacian, what we need to do is to obtain lower bounds for $\lambda_0(\Sigma_\eta)$ over all choices of $\eta$. The key to do so is the relation between $\lambda_0(\Sigma_\eta)$ and the Cheeger constant:
\begin{equation}\label{eq-cheeger}
h(\Sigma_\eta)=\inf\frac{\ell(\D A)}{\vol(A)}
\end{equation}
where the infimum is taken over all compact subsurfaces $A$ of the infinite volume surface $\Sigma_\eta$. More concretely, Cheeger's inequality \cite{Cheeger} asserts that
$$\frac 14h(\Sigma_\eta)^2\le\lambda_0(\Sigma_\eta).$$
Since we are going to be interested in uniform bounds for $\lambda_0(\Sigma_\eta)$ over all choices of $\eta$ we will be  interested in the quantity
\begin{equation}\label{min-cheeger}
H(\Sigma)=\inf_{\eta\in\CS(\Sigma)}h(\Sigma_\eta).
\end{equation}
We will see that $H(\Sigma)$ satisfies the following inequality.

\begin{lem}\label{lem-funny-cheeger}
We have 
$$H(\Sigma)\ge \frac{2\cdot\syst(\Sigma)}{\sqrt{\vol(\Sigma)^2+4\cdot\syst(\Sigma)^2}}$$
for every closed hyperbolic surface. 
\end{lem}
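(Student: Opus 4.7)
My approach estimates the ratio $\ell(\partial A)/\vol(A)$ for an arbitrary compact subsurface $A\subset\Sigma_\eta$ by reducing, via Fermi coordinates in the funnels, to a one-variable optimization. Write $N=N(\Sigma_\eta)$, $F=\Sigma_\eta\setminus N$, $L=\syst(\Sigma)\le\ell(\eta)$, $V=\vol(\Sigma)$, and set $v_c=\vol(A\cap N)\le V$ and $v_f=\vol(A\cap F)$.

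The key technical step is a funnel isoperimetric inequality. Parametrize each funnel $F_i$ attached to a copy $\eta_i$ of $\eta$ in $\partial N$ by $(\theta,t)\in\BR/\ell(\eta)\BZ\times[0,\infty)$ with metric $\cosh^2(t)\,d\theta^2+dt^2$, and set $\ell_i=\ell(A\cap\eta_i)$, $v_i=\vol(A\cap F_i)$, and $p_i$ for the length contribution of the closed funnel $\overline{F_i}$ to $\partial A$. I claim that
$$p_i\ge\sqrt{\ell_i^2+v_i^2}.$$
When $A\cap\overline{F_i}$ is presented as a ``graph'' $\{t\le T(\theta)\}$ over an inner trace $I_i\subset\eta_i$ of length $\ell_i$, one has $v_i=\int_{I_i}\sinh T\,d\theta$ and $p_i\ge\int_{I_i}\cosh T\,d\theta$; since $\cosh T=\sqrt{1+\sinh^2 T}$ and $s\mapsto\sqrt{1+s^2}$ is convex, Jensen's inequality yields the claim. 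Components of $A$ lying in the interior of a funnel and disjoint from $\eta_i$ obey the even stronger hyperbolic isoperimetric bound $\ell\ge v$.

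Summing over funnels with Minkowski's inequality $\sum\sqrt{a_i^2+b_i^2}\ge\sqrt{(\sum a_i)^2+(\sum b_i)^2}$ yields
$$\ell(\partial A)\ge\sqrt{(\ell^*)^2+v_f^2},\qquad\ell^*:=\ell(A\cap\partial N).$$
A modification argument (replacing $A$ by $A\cup N$ and checking that this only improves the ratio) reduces to the case $A\supset N$, in which $v_c=V$ and $\ell^*=\ell(\partial N)\ge 2\ell(\eta)\ge 2L$ in the non-separating case (and an analogous argument per component handles the separating case). The bound then follows from the one-variable calculus exercise
$$\inf_{v_f\ge 0}\frac{\sqrt{4L^2+v_f^2}}{V+v_f}=\frac{2L}{\sqrt{V^2+4L^2}},$$
attained at $v_f=4L^2/V$.

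The main technical obstacle is making the reduction $A\supset N$ rigorous, in particular for configurations in which $A$ lies mostly or entirely inside the core interior (where the funnel inequality gives essentially no information). Here a separate argument, likely a collar-type estimate near $\partial N$ showing that the ``missing'' inner trace is compensated by additional boundary in the core, is needed.
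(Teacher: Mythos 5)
Your funnel estimate is essentially correct and is in fact a variable--width version of the computation the paper does anyway: the paper's Fact~\ref{fact-calculus} minimizes $r\mapsto \cosh(r)L/(V+\sinh(r)L)$ and obtains the same value $L/\sqrt{V^2+L^2}$ that your Jensen--Minkowski argument produces. The genuine problem is the step you yourself flag, and it is not a technicality to be patched by a collar estimate --- it is the entire content of the lemma. The modification $A\mapsto A\cup N$ does \emph{not} only improve the ratio. Take $A$ to be a convex constant--curvature neighbourhood of a subsurface $A_0\subset N(\Sigma_\eta)$ with geodesic boundary a single curve $\delta$ of length $\syst(\Sigma)$ and with $|\chi(A_0)|$ close to $|\chi(\Sigma)|$: then $\ell(\partial A)/\vol(A)$ is approximately $\syst(\Sigma)/(2\pi|\chi(A_0)|)$, while $\ell(\partial(A\cup N))/\vol(A\cup N)=2\ell(\eta)/\vol(\Sigma)$, which is strictly larger as soon as $2|\chi(A_0)|>|\chi(\Sigma)|$ (and much larger if $\ell(\eta)\gg\syst(\Sigma)$). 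Since you are bounding an infimum from below you must control \emph{every} candidate, and these interior candidates --- on which your funnel inequality gives no information --- cannot be discarded by comparison with $A\cup N$.

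The paper's route is designed precisely for these candidates: it first reduces (via Buser and Benson) to convex subsurfaces with constant geodesic curvature boundary, then passes to the cover $\Sigma_\eta^A$ determined by $\pi_1(A)$, where the candidate becomes a metric neighbourhood of the Nielsen core and the one--variable optimization applies with $L_A=\ell(\partial N(\Sigma^A))$ and $V_A=\vol(N(\Sigma^A))$. The remaining work is to show $L_A/\sqrt{V_A^2+L_A^2}\ge 2\syst(\Sigma)/\sqrt{\vol(\Sigma)^2+4\syst(\Sigma)^2}$, which is equivalent to $V_A/L_A\le \vol(\Sigma)/(2\syst(\Sigma))$; by Gau\ss --Bonnet this is a constraint relating $|\chi(A)|$ to the number and lengths of the boundary geodesics of $A$, and it is tightest exactly for candidates with a single short boundary component and large $|\chi(A)|$ (indeed, for such $A$ the paper's own inequality \eqref{I am cold}, which asserts $\ell(\partial N(\Sigma^A))\ge 2\ell(\eta)$ for $\eta$ the shortest boundary component, deserves scrutiny). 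So completing your argument requires a genuinely new input for candidates not containing $N$; the difficulty lives entirely inside the core and has nothing to do with the geometry near $\partial N$.
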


Assuming Lemma \ref{lem-funny-cheeger} we conclude the proof of Proposition \ref{prop-blablabla}:

\begin{proof}[Proof of Proposition \ref{prop-blablabla}]
We first prove the upper bound. Note that there is nothing to prove if $\dim_\CH\Lambda(\Gamma_\eta)\le\frac 12$ for all $\eta$. 
Otherwise we have
\begin{align*}
\sup_{\eta\in\CS(\Sigma)}\dim_\CH(\Lambda(\Gamma_\eta))
&=\sup_{\eta\in\CS(\Sigma)\text{ with }\dim_\CH\Lambda(\Gamma_\eta)>\frac 12}\dim_\CH(\Lambda(\Gamma_\eta))\\
&=\frac 12+\sqrt{\frac 14-\min_{\eta\in\CS(\Sigma)\text{ with }\lambda_0(\Sigma_\eta)<\frac 14}\lambda_0(\Sigma_\eta)}\\
&\le\frac 12+\frac 12\sqrt{1-\min_{\eta\in\CS(\Sigma)\text{ with }h(\Sigma_\eta)<1}h(\Sigma_\eta)^2}\\
&\le\frac 12+\frac 12\sqrt{1-H(\Sigma)^2},
\end{align*}
where the first inequality holds by the Cheeger's inequality and the second is just the definition of $H(\Sigma)$. Now, by Lemma \ref{lem-funny-cheeger} we have 
$$H(\Sigma)\ge \frac{2\cdot\syst(\Sigma)}{\sqrt{\vol(\Sigma)^2+4\cdot\syst(\Sigma)^2}}.$$
Combining the two inequalities above, we obtain the deisred upper bound
$$\sup_{\eta\in\CS(\Sigma)}\dim_\CH(\Lambda(\Gamma_\eta))\le\frac 12+\frac 12\sqrt{1-\frac{4\cdot\syst(\Sigma)^2}{\vol(\Sigma)^2+4\cdot\syst(\Sigma)^2}}.$$
Now, to prove the lower bound recall that by Lemma \ref{H-lowerbound} we have 
\begin{equation}\label{lazy cat}
\dim_{\CH}\Lambda(\Gamma_\eta)\ge 2+\sqrt{1-10\frac{\ell_\Sigma(\eta)}{\vol(\Sigma)}}
\end{equation}
for every $\eta\in\CS(\Sigma)$ with $10\cdot \ell_\Sigma(\eta)\le\vol(\Sigma)$. By assumption there is such a curve $\eta$ and the desired bound follows from applying \eqref{lazy cat} to any $\eta$ with $\ell_\Sigma(\eta)=\syst(\Sigma)$. This finishes the proof of the proposition.
\end{proof}

The rest of this section is devoted to proving Lemma \ref{lem-funny-cheeger}. We will rely heavily on the fact that the Cheeger constant is realized by a subsurface whose boundary has constant geodesic curvature. This was for instance observed by Buser \cite{Buser}, but the particular case of geometrically finite surfaces  has been discussed in all details in \cite{Benson}.

\begin{proof}[Proof of Lemma \ref{lem-funny-cheeger}]
To begin with,  note that the statement trivially holds if $H(\Sigma)\ge 1$. Note also that if $H(\Sigma)<1$, then to compute $H(\Sigma)$ we only need to consider  curves $\eta$ with $h(\Sigma_\eta)<1$. Therefore suppose that we have such an $\eta$ with $h(\Sigma_\eta)<1$. As we mentioned before the proof, to compute $h(\Sigma_\eta)$ it suffices (see \cite{Benson}) to take the infimum in \eqref{eq-cheeger} over all subsurfaces $A$ satisfying the following property:
\begin{itemize}
\item[(*)] $A$ is a convex subsurface whose boundary has constant geodesic curvature. 
\end{itemize}
Moreover, since $h(\Sigma_\eta)<1$, we can assume that $A$ is neither a disk nor an annulus. 

Given now a candidate $A\subset\Sigma_\eta$ satisfying (*), consider the cover $\Sigma_\eta^A\to\Sigma_\eta$ corresponding to the subgroup $\pi_1(A)$ of $\pi_1(\Sigma_\eta)$, and note that $A$ lifts homeomorphically to $\Sigma_\eta^A$. This means that when computing $h(\Sigma_\eta^A)$ we can also use the candidate $A$. This means that, always under the assumption that $h(\Sigma_\eta)<1$, we can bound the Cheeger constant from below as follows:
\begin{equation}\label{eq-cheeger-bound}
h(\Sigma_\eta)\ge\inf_{A\subset\Sigma_\eta\text{ satisfying (*) and }\chi(A)<0}h(\Sigma_\eta^A)
\end{equation}
Continuing with the same notation, suppose that $B\subset\Sigma_\eta^A$ is any candidate to compute $h(\Sigma_\eta^A)$. Then, taking into account that all covers are geometric in the sense that they correspond to embedded subsurfaces, we obtain that there is a subsurface $B'\subset\Sigma_\eta$ whose corresponding cover $\Sigma_\eta^{B'}$ agrees with the cover of $\Sigma_\eta^A$ corresponding to $B$. Now, since any tower of geometric covers has to stabilize, we deduce that in \eqref{eq-cheeger-bound} we can restrict ourselves to those surfaces $\Sigma_\eta^A$ with the property that $h(\Sigma_\eta^A)$ can be computed using only subsurfaces $B\subset\Sigma_\eta^A$ satisfying (*) and such that the inclusion $B\to\Sigma_\eta^A$ is a homotopy equivalence:
$$h(\Sigma_\eta)\ge\inf\left\{
\frac{\ell(\D B)}{\vol(B)}\middle\vert 
\begin{array}{l}
A\subset\Sigma_\eta\text{ }\pi_1\text{-injective with }\chi(A)<0,\\
B\subset\Sigma_\eta^A\text{ satisfying (*), and such that}\\
B\to\Sigma_\eta^A\text{ is a homotopy equivalence}
\end{array}
\right\}$$
Combining together this last bound for $h(\Sigma_\eta)$ and  the definition of $H$ given in \eqref{min-cheeger} we get  that
\begin{equation}\label{I am sick of this}
H(\Sigma)\ge\inf\left\{
\frac{\ell(\D B)}{\vol(B)}\middle\vert 
\begin{array}{l}
A\subsetneq\Sigma\text{ }\pi_1\text{-injective with }\chi(A)<0,\\
B\subset\Sigma^A\text{ satisfying (*), and such that}\\
B\to\Sigma^A\text{ is a homotopy equivalence}
\end{array}
\right\}
\end{equation}
The advantage of \eqref{I am sick of this} is that for each geometric cover $\Sigma^A\to\Sigma$ it is possible to compute the optimal ratio of $\frac{\ell(\D B)}{\vol(B)}$ over all $B\subset\Sigma^A$ satsifying (*), and such that $B\to\Sigma^A$ is a homotopy equivalence:

\begin{fact}\label{fact-calculus}
Suppose that we are given a geometric cover $\Sigma^A\to\Sigma$ with $\chi(\Sigma^A)<0$. Then we have
$$\inf\left\{
\frac{\ell(\D B)}{\vol(B)}\middle\vert 
\begin{array}{l}
B\subset\Sigma^A\text{ satsifying (*), and such that}\\
B\to\Sigma^A\text{ is a homotopy equivalence}
\end{array}
\right\}=\frac L{\sqrt{V^2+L^2}}$$
where $L=\ell(\D N(\Sigma^A))$ and $V=\vol(N(\Sigma^A))$.
\end{fact}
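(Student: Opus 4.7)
The plan is to reduce the infimum to a one-parameter optimization problem and then solve it by elementary calculus. I would begin with the funnel decomposition of $\Sigma^A\setminus N(\Sigma^A)$: since $\Sigma^A$ is a complete hyperbolic surface of finite topological type whose Nielsen core $N(\Sigma^A)$ is compact, each connected component of $\Sigma^A\setminus N(\Sigma^A)$ is isometric to a hyperbolic funnel $[0,\infty)_t\times(\BR/\ell_i\BZ)_\theta$ with metric $dt^2+\cosh^2(t)\,d\theta^2$ over a boundary geodesic of length $\ell_i$. A direct computation in this model shows that the equidistant curve $\{t=t_0\}$ has length $\ell_i\cosh(t_0)$ and constant geodesic curvature $\tanh(t_0)$, and that within each funnel these are the only simple closed curves of constant geodesic curvature isotopic to the core geodesic.

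Next, I would argue that every candidate $B$ is determined by a single parameter $t\ge 0$. Since $B$ is convex and $B\hookrightarrow\Sigma^A$ is a homotopy equivalence, $B$ carries all of $\pi_1(\Sigma^A)$; as $N(\Sigma^A)$ is the smallest convex subsurface with this property, we conclude $N(\Sigma^A)\subset B$. Each component of $\partial B$ therefore lies in a funnel, is isotopic to the core of that funnel, and has constant geodesic curvature, so by the previous step it must be an equidistant curve. The requirement from (*) that this curvature be the same on every component of $\partial B$ then forces the distance $t$ from $\partial N(\Sigma^A)$ to be the same in every funnel, parametrizing the remaining candidates by the single variable $t\ge 0$.

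Once $B$ is of this form, summing over the funnels and setting $L=\ell(\partial N(\Sigma^A))$ and $V=\vol(N(\Sigma^A))$ yields
$$\ell(\partial B)=L\cosh(t),\qquad \vol(B)=V+L\sinh(t).$$
The problem thus reduces to minimizing $F(t)=L\cosh(t)/(V+L\sinh(t))$ over $t\ge 0$. A short computation shows that $F'(t)$ vanishes precisely when $\sinh(t)=L/V$, whence $\cosh(t)=\sqrt{V^2+L^2}/V$, and substitution gives $F(t)=L/\sqrt{V^2+L^2}$; that this is indeed the minimum follows by comparison with $F(0)=L/V$ and $\lim_{t\to\infty}F(t)=1$.

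The main obstacle, as usual in problems of this kind, is the geometric reduction in the second step: one must simultaneously use the convexity of $B$, the constant geodesic curvature condition (*), and the homotopy equivalence $B\to\Sigma^A$ to pin $B$ down as the union of $N(\Sigma^A)$ with equidistant collars of a common width in every funnel. Once this identification is achieved, the remaining minimization is a routine one-variable exercise.
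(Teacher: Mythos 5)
Your proof is correct and takes essentially the same approach as the paper: the paper likewise reduces the problem to the one-parameter family of equidistant neighborhoods $B(r)$ of the Nielsen core, writes down the same formulas $\ell(\partial B(r))=L\cosh(r)$ and $\vol(B(r))=V+L\sinh(r)$, and minimizes the resulting quotient at $\sinh(r)=L/V$. The only difference is that you spell out the justification that every candidate $B$ must be such an equidistant neighborhood, a step the paper asserts without proof.
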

\begin{proof}
The compact subsurfaces $B$ of $\Sigma^A$ satisfying (*) and such that the inclusion $B\to\Sigma^A$ is a homotopy equivalence are all of the form
$$B(r)=\{x\in\Sigma^A\vert d(x,CC(\Sigma^A)\le r\}\text{ with }r\in[0,\infty).$$
We can compute the volume and the length of the boundary of $B(r)$ explicitly:
$$\vol(B(r))=\vol(N(\Sigma^A))+\sinh(r)\cdot\ell(\D N(\Sigma^A))$$
$$\ell(\D B(r))=\cosh(r)\cdot\ell(\D N(\Sigma^A))$$
Now it is a calculus problem to check that the function
$$r\to\frac{\cosh(r)\cdot\ell(\D N(\Sigma^A))}{\vol(N(\Sigma^A))+\sinh(r)\cdot\ell(\D N(\Sigma^A))}=\frac{\cosh(r)\cdot L}{V+\sinh(r)\cdot L}$$
is minimized if $\sinh(r)=\frac LV$. At this point it takes the value $\frac L{\sqrt{V^2+L^2}}$. We have proved Fact \ref{fact-calculus}.
\end{proof}

Suppose now that $\Sigma^A\to\Sigma$ is a geometric cover and let $\eta\subset\D N(\Sigma^A)$ be a shortest boundary component. Since the cover is geometric, the curve $\eta$ projects homeomorphically to a curve $\eta\subset\Sigma$ of the same length. As always, suppose for the sake of concreteness that $\eta$ is non-separating. Then we have that 
\begin{align*}
&\vol(N(\Sigma^A))\le\vol(\Sigma)=\vol(N(\Sigma_\eta))&\text{   and}\\
&\ell(\D N(\Sigma^A))\ge\ell(\D N(\Sigma_\eta))=2\cdot\ell(\eta)&
\end{align*}
Combining these two inequalities we obtain
\begin{equation}\label{I am cold}
\frac{\ell(\D N(\Sigma^A))}{\sqrt{\vol(N(\Sigma^A))^2+\ell(\D N(\Sigma^A))^2}}\ge\frac{2\cdot \ell(\eta)}{\sqrt{\vol(\Sigma)^2+4\cdot \ell(\eta)^2}}.
\end{equation}
We leave to the reader to check that the bound \eqref{I am cold} also holds if $\eta$ is separating.

Combining now Fact \ref{fact-calculus}, inequality \eqref{I am cold} and bound \eqref{I am sick of this} for $H(\Sigma)$ we get 
$$H(\Sigma)\ge \frac{2\cdot\syst(\Sigma)}{\sqrt{\vol(\Sigma)^2+4\cdot \syst(\Sigma)^2}},$$
which is the bound we wanted to obtain.
\end{proof}

\section{The linear case}\label{sec:comments}

Theorem \ref{sat1} and Theorem \ref{sat2}, together with the Birman-Series theorem \cite{Birman-Series} describe the set $X_f$ for functions with are either superliner or sublinear. The case of linear functions seems different because $X_f$ depends on the concrete function. To make this statement precise consider for $\tau\in(0,\infty)$ the function $x\mapsto\tau x$ and the corresponding set $X^\tau=X_{x\mapsto\tau x}$ and consider also
$$X^0=\cap_\tau X^\tau\text{ and }X^\infty=\overline{\cup_\tau X^\tau}.$$
Note that all these sets are closed and that by definition we have 
$$X^0\subset X^\tau\subset X^{\tau'}\subset X^\infty$$ 
for all $\tau\le\tau'$. The argument used to prove Proposition \ref{prop-contained in X} shows that $X^0\subset\BX$ and hence that $X^0=\BX$ by Proposition \ref{prop-contains X}. Similarly, the argument used in Lemma \ref{lem-I have a headache} proves that  every periodic geodesic $\gamma$ in $\Sigma$ is in a Hausdorff  limit of a sequence of (infinite) geodesics 
$\gamma_n\in X^{\tau_n}$, and therefore  $\gamma\subset X^\infty$. As in the proof of Proposition \ref{prop-contains X}, this implies that $X^\infty=PT\Sigma$.

\begin{lem}\label{lem-linear}
Suppose that $\Sigma$ is a closed surface. We have $X^0=\BX$ and $X^\infty=PT\Sigma$.\qed
\end{lem}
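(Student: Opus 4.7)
The first equality $X^0=\BX$ splits into two inclusions. The inclusion $\BX\subset X^0$ is immediate from Proposition \ref{prop-contains X}: each function $f_\tau(t)=\tau t$ is unbounded, so $\BX\subset X^\tau$ for every $\tau>0$, and intersecting over $\tau$ gives $\BX\subset X^0$. For the reverse inclusion $X^0\subset\BX$ the plan is to run a diagonal variant of the argument in Proposition \ref{prop-contained in X}. Given $x\in X^0$, for each integer $n$ I pick a primitive periodic $f_{1/n}$-simple geodesic $\gamma_n$ with $\ell_\Sigma(\gamma_n)\ge n$ whose tangent lift is within distance $1/n$ of $x$ in $PT\Sigma$. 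Then
\[
\frac{\I(\gamma_n,\gamma_n)}{\ell_\Sigma(\gamma_n)^2}\le\frac{1}{n\,\ell_\Sigma(\gamma_n)}\le\frac{1}{n^2}\longrightarrow 0,
\]
so after extraction the normalized currents $\ell_\Sigma(\gamma_n)^{-1}\gamma_n$ converge to a current $\mu$ with $\I(\mu,\mu)=0$, i.e.\ to a measured lamination. The verbatim argument of Proposition \ref{prop-contained in X} then shows that the Hausdorff limit of the $\gamma_n$, which contains $x$ by construction, is contained in $\hat\mu$; replacing $\mu$ by one of its minimal components $\mu'$ yields $x\in\hat{\mu'}\subset\BX$.

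For the second equality $X^\infty=PT\Sigma$, since $X^\infty$ is closed and primitive periodic orbits are dense in $PT\Sigma$, it suffices to show $\gamma\subset X^\infty$ for every closed geodesic $\gamma\subset\Sigma$. Fix such $\gamma$, pick a simple closed geodesic $\alpha\neq\gamma$ through a point of $\gamma$, and set $\beta_n=\gamma^n*\alpha^{a_n}$ for integers $a_n\to\infty$ to be chosen. The estimates from Lemma \ref{lem-I have a headache} give $\I(\beta_n,\beta_n)\le C(n^2+na_n)$ and $\ell_\Sigma(\beta_n)\ge C'a_n$, so for $a_n$ large enough compared to $n$ the curve $\beta_n$ is $f_{\tau_n}$-simple with $\tau_n$ of order $n$; the lift-and-compare-endpoints argument of Fact \ref{fact1} shows that $\gamma$ lies in the Hausdorff limit of $(\beta_n)$.

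The remaining task is to upgrade $\beta_n\in\CS_{f_{\tau_n}}$ to $\beta_n\subset X^{\tau_n'}$ for some $\tau_n'\ge\tau_n$, because $X^{\tau_n'}$ requires approximability by arbitrarily long $f_{\tau_n'}$-simple curves rather than mere $f$-simplicity. Using that $\overline{\CS_f}\setminus X_f$ is countable, a mild perturbation of the construction---for instance iterating the procedure of Lemma \ref{lem-I have a headache} with $\beta_n$ playing the role of $\gamma$, or inserting additional Dehn twists along $\beta_n$ in the spirit of Fact \ref{fact1}---produces long $f_{\tau_n'}$-simple curves whose Hausdorff limit still contains $\beta_n$. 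This places $\beta_n\subset X^{\tau_n'}$ and hence $\gamma\subset\overline{\bigcup_n\beta_n}\subset\overline{\bigcup_\tau X^\tau}=X^\infty$. The hardest step is precisely this upgrade: the naive approximation $\gamma^n*\alpha^{a_n+k}$ does not work because its geodesic representative degenerates onto $\alpha$ as $k\to\infty$ rather than shadowing $\beta_n$, so a Dehn-twist style variant is needed to keep the approximating geodesics close to $\beta_n$ while retaining control on the ratio of self-intersection to length.
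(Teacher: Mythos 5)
Your treatment of $X^0=\BX$ is correct and is the paper's argument: $\BX\subset X^\tau$ for every $\tau>0$ by Proposition \ref{prop-contains X}, and the diagonal choice of $f_{1/n}$-simple curves $\gamma_n$ with $\ell_\Sigma(\gamma_n)\ge n$ feeds verbatim into the proof of Proposition \ref{prop-contained in X}, since that proof only uses $\I(\gamma_n,\gamma_n)/\ell_\Sigma(\gamma_n)^2\to 0$ against the linear lower bound on self-intersection produced by a transverse crossing of the limit lamination; passing to a minimal component at the end is the right move.

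The second half has a genuine gap, located exactly where you place the difficulty, and your proposed repair does not work while the route you reject is the correct one. Neither suggested upgrade puts $\beta_n=\gamma^n*\alpha^{a_n}$ into some $X^{\tau'_n}$ when $\beta_n$ fills the surface (which it typically does once $\gamma$ does): the Dehn-twist trick of Fact \ref{fact1} needs a simple closed curve disjoint from $\beta_n$ --- and if it applied to arbitrary closed geodesics it would force $\BX=PT\Sigma$, contradicting Theorem \ref{prop-H-bound} --- while iterating Lemma \ref{lem-I have a headache} with $\beta_n$ in place of $\gamma$ yields curves $\beta_n^m*(\alpha')^{b_m}$ whose ratio of self-intersection to length grows like $m\cdot\I(\beta_n,\alpha')$ and so eventually violates every fixed linear bound. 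Whether a filling closed geodesic belongs to $X^\tau$ for some finite $\tau$ is not obtainable by a ``mild perturbation,'' and the paper never claims it. The fix is the ``naive'' family: for fixed $n$ the curves $c_{n,k}=\gamma^n*\alpha^{k}$, $k\to\infty$, are pairwise distinct, have lengths tending to infinity, and satisfy $\I(c_{n,k},c_{n,k})\le\tau_n\,\ell_\Sigma(c_{n,k})$ for large $k$ with $\tau_n$ of order $n$, so by definition every point of their Hausdorff limit lies in $X^{\tau_n}$. Your objection conflates convergence of the normalized currents (which do tend to $\alpha$) with Hausdorff convergence of the supports: realizing $c_{n,k}$ as the axis of $g_\gamma^n h_\alpha^k$, these axes converge to the bi-infinite geodesic $\delta_n$ with endpoints $h_\alpha^-$ and $g_\gamma^n(h_\alpha^+)$, which spirals onto $\alpha$ at both ends and in between fellow-travels $\gamma^n$; the Hausdorff limit therefore contains $\alpha\cup\delta_n$, not merely $\alpha$. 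Hence $\delta_n\subset X^{\tau_n}$, and since the $\delta_n$ shadow ever longer and exponentially closer pieces of $\gamma$, we get that $\gamma$ is contained in the Hausdorff limit of the $\delta_n$ and thus in $X^\infty$. This is precisely what the paper's one-line proof means by ``$\gamma$ is in a Hausdorff limit of a sequence of (infinite) geodesics $\gamma_n\in X^{\tau_n}$''; density of periodic orbits and closedness of $X^\infty$ then finish as you say.
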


Clearly, Lemma \ref{lem-linear} shows that for every closed hyperbolic surface $\Sigma$ there are $\tau$ and $\tau'$ with $X^\tau\neq X^{\tau'}$.

What we do not know is how  the function
\begin{equation}\label{eq-bla}
[0,\infty]\to\BR_+,\ \tau\mapsto\dim_\CH X^\tau
\end{equation}
behaves. Is it continuous? If not, where are the discontinuities and what is the image of \eqref{eq-bla}?

\end{document}